\documentclass[final,1p,times,authoryear]{elsarticle}
\usepackage{amsmath}
\usepackage{amsthm}
\usepackage{amssymb}
\usepackage{amsfonts}


\usepackage{mathtools}
\newcommand{\prodd}[1]{\prod_{\mathclap{#1}}}

\usepackage{csquotes}

\usepackage[capitalize]{cleveref}
\crefformat{equation}{(#2#1#3)}
\crefrangeformat{equation}{(#3#1#4) to~(#5#2#6)}
\crefmultiformat{equation}{(#2#1#3)}%
{ and~(#2#1#3)}{, (#2#1#3)}{ and~(#2#1#3)}
\crefname{lemma}{Lemma}{Lemmata}
\crefname{conjecture}{Conjecture}{Conjectures}
\usepackage{autonum}

\newtheorem{theorem}{Theorem}
\newtheorem{lemma}[theorem]{Lemma}
\newtheorem{corollary}[theorem]{Corollary}
\newtheorem{conjecture}[theorem]{Conjecture}
\newtheorem{proposition}[theorem]{Proposition}
\theoremstyle{definition}

\newtheorem{example}[theorem]{Example}

%

\usepackage{subcaption}
\captionsetup[figure]{labelfont=normalfont,textfont=normalfont,singlelinecheck=off}
\captionsetup[subfigure]{singlelinecheck=off,font=small}

\usepackage{nicefrac}

\newcommand{\R}{\mathbb{R}}
\newcommand{\C}{\mathbb{C}}
\newcommand{\T}{\mathcal{T}}
\newcommand{\M}{\mathcal{M}}
\newcommand{\X}{\mathbb{X}}
\newcommand{\ttheta}{\boldsymbol{\theta}}
\newcommand{\ptheta}{p_{\ttheta}}
\DeclareMathOperator{\pa}{pa}
\newcommand{\treemodel}{\M_{\T}}
\newcommand{\ideal}[1]{\langle#1\rangle}

\newcommand{\ipaths}{\ensuremath{I_{\mathrm{paths}}}}
\newcommand{\impaths}{\ensuremath{I_{\mathrm{mpaths}}}}
\newcommand{\phitoric}{\varphi_{\text{\rm toric}}}
\DeclareMathOperator{\var}{V}
\newcommand{\head}[1]{{#1}_{\text{h}}}
\newcommand{\tail}[1]{{#1}_{\text{t}}}
\newcommand\independent{\protect\mathpalette{\protect\independenT}{\perp}}
\def\independenT#1#2{\mathrel{\rlap{$#1#2$}\mkern2mu{#1#2}}}

\usepackage[dvipsnames]{xcolor}

\usepackage{tikz}
\usetikzlibrary{calc}
\usetikzlibrary{arrows}
\usetikzlibrary{decorations.pathmorphing}

\newcommand{\xx}{1}
\newcommand{\yy}{1}

\newcommand{\bt}{\tikz{\node[shape=circle,draw,inner sep=2pt] {};}}
\newcommand{\ls}[1]{{\small{#1}}}

\newcommand{\stage}[2]{\tikz[baseline=(char.base)]{
            \node[shape=circle,draw,inner sep=0.5pt,fill={#1}] (char) {$v_{#2}$};}}

\begin{document}
\begin{frontmatter}
\title{Equations defining probability tree models}

\author{Eliana Duarte}
\address{Otto Von Guericke Universit\"at Magdeburg, Germany}
\ead{eliana.duarte@ovgu.de}
\ead[url]{https://emduart2.github.io}

\author{Christiane G\"orgen}
\address{Max Planck Institute for Mathematics in the Sciences, Leipzig, Germany}
\ead{goergen@mis.mpg.de}
\ead[url]{https://sites.google.com/view/goergen}

\begin{abstract} 

Staged trees or coloured probability tree models are statistical models coding conditional independence between events depicted in a tree graph. They include the very important class of Bayesian networks as a special case and provide a straightforward graphical tool for handling additional context-specific relationships. In this paper, we study the algebraic properties of their ideal of model invariants. We hereby find that the tree also provides a straightforward combinatorial tool to generalise the existing geometric characterisation of decomposable graphical models and Bayesian networks. In particular, from a staged tree we can directly understand the interplay between local and global sum-to-one conditions, read the generators of that ideal, and determine conditions under which the model is a toric variety intersected with the probability simplex.
\end{abstract}

\begin{keyword}
Algebraic Geometry; Algebraic Statistics; Graphical Models; Staged Trees
\end{keyword}
\end{frontmatter}


\section{Introduction}
The term \emph{graphical model} usually refers to a statistical model represented by a graph whose vertices are random variables and whose edges code conditional independence statements between these variables \citep{Lauritzen.1996}. Directed graphical models are also known as Bayesian networks. A more recent statistical model which can be represented graphically by a coloured probability tree is the \emph{staged tree}, sometimes called a \emph{chain event graph} \citep{Collazo.etal.2018}. Staged trees represent conditional independence relationships between depicted events rather than between random variables and do not enforce a product structure on their state space. Staged tree models are thus very general models. They contain both discrete Bayesian networks and the wider class of discrete and context-specific Bayesian networks---which allow for conditional independence statements that do not necessarily hold for all levels of a random variable \citep{Boutilier.etal.1996}---as a special case. 

Graphical statistical models are of great practical importance and their methodological foundations have now been well developed. The characterisation of so-called decomposable graphical models, and more generally of regular exponential families, as toric varieties has made an important contribution to linking properties of these models to well-known structures in algebraic geometry \citep{Pistone.etal.2001a,Geiger.etal.2006}.
However, not all graphical models are decomposable and graphical models are not always regular exponential families. Two examples of this are Bayesian networks and staged trees. These, in particular, do not necessarily correspond to toric varieties. We show in this paper that for non-decomposable graphical models the sum-to-one conditions on the parameter space cannot always be ignored, and we give sufficient conditions for staged trees to give rise to toric varieties.

In Section 2 we formally introduce staged tree models. We explain how they form  a bigger class of statistical models than discrete Bayesian networks that can treat these and their context-specific generalisations in a unified graphical framework. 
Next we focus on the combinatorial framework that these trees provide to carry out the study of the ideal of model invariants associated to any staged tree model in \cref{sect:implicit}. The elements in the ideal of model invariants are given by differences of odds ratios and they can be easily read from any tree graph representation of the model. In particular, 
this constitutes a new combinatorial approach to obtain the ideal of model invariants associated to a Bayesian network. In \cref{sect:nontoric,sect:toric}, we state a full characterisation of staged tree models using the language of commutative algebra.
 This characterisation is formally stated in terms of the algebraic and combinatorial properties of the staged tree and generalises
 previous results obtained for Bayesian networks by \citet{Garcia.etal.2005}. In \cref{thm:toric} we give sufficient \emph{graphical} conditions for a staged tree model to be a toric variety. In particular, the toric staged tree model represents a new combinatorial object to which we associate a toric ideal.  We illustrate these results in a long example developed over \cref{sect:example}, stressing how they advance the current literature on algebraic characterisations of standard graphical models. We conclude in \cref{sect:discussion} with stating a number of conjectures and outlining how our results can be interesting to researchers working in toric geometry.

Throughout the paper, we will use a number of basic notions from algebraic geometry and statistics. We refer to \citet{cox2007} for 
basics about ideals and varieties. For a more detailed treatment of toric varieties and the combinatorics of their defining
ideals we refer to \citet{Cox.etal.2011,millerSturmfels}.

\section{Staged tree models}
\label{sect:stagedtrees}

Following the formalism developed by \citet{Goergen.Smith.2017}, we always let $\T=(V,E)$ denote a directed rooted tree graph where every vertex has either no or at least two outgoing edges. The set of these outgoing edges is denoted $E(v)\subseteq E$ for every vertex $v\in V$. To every edge $e\in E$ we assign a strictly positive probability $\theta(e)\in(0,1)$ such that the labels of all edges coming out of the same vertex sum to one, $\sum_{e\in E(v)}\theta(e)=1$. We call such a labelled tree graph a \emph{probability tree}. Probability trees provide a straightforward framework to transform a problem description given in natural language into a valid statistical model \citep{Collazo.etal.2018}. They are easy to communicate to non-experts and are very versatile in applications and as representational tools in causal inference: compare \citet{Shafer.1996} and the references given at the end of this section. The positivity assumption in probability trees precludes issues related to validity assumptions as present for instance in Bayesian networks. The local sum-to-1 conditions ensure that the multiplication rule of edge labels along root-to-leaf paths in a probability tree induces a well-defined probability distribution over the graph. We denote this distribution by $\ptheta$ and the probability of a single outcome by $\ptheta(\lambda)=\prod_{e\in E(\lambda)}\theta(e)$. Here, $E(\lambda)\subseteq E$ denotes the edge set of a root-to-leaf path $\lambda$ in the tree, and the index $\ttheta=(\theta(e)~|~e\in E)$ denotes the vector of all edge labels. A \emph{probability tree model} is the set of all probability distributions which can be written in this form, for varying values of edge labels:
\begin{equation}\label{eq:treemodel}
\treemodel=\bigl\{\ptheta~|~\ttheta\in\Theta_{\T}\bigr\}.
\end{equation}
The parameter space $\Theta_{\T}$ of a probability tree model $\treemodel$ is  a product of probability simplices $\Theta_{\T}=\times_{v\in V}\Delta_{\#E(v)-1}^\circ$, one for each vertex, where 
\begin{equation}\label{eq:simplex}
\Delta_{r-1}^\circ=\Bigl\{x\in\R^r~|~\sum_{i=1}^rx_i=1\text{ and }0<x_i<1\text{ for all }i=1,\ldots,r\Bigr\}
\end{equation}
always denotes the open $r-1$-dimensional probability simplex, for some positive integer $r$. 
The probability tree model \cref{eq:treemodel} is equal to the open probability simplex $\Delta_{n-1}^\circ$ where $n$ denotes the number of root-to-leaf paths, or atomic events. 

We henceforth denote the  vector of all probabilities attached to a vertex $v\in V$ in a probability tree by the bold character $\ttheta_v=(\theta(e)~|~e\in E(v))$. A \emph{staged tree} is then a probability tree together with an equivalence relation on the vertex set such that two vertices are in the same stage if and only if their outgoing edges have the same attached probabilities: in symbols, $v\sim w$ if and only if $\ttheta_v=\ttheta_w$, possibly up to a permutation of the components of these vectors. See \cref{fig:oddsratios} on page~\pageref{fig:oddsratios} for an illustration. A \emph{staged tree model} $\treemodel$ is therefore simply a probability tree model where some probability simplices in the parameter space have been identified with each other. 
In particular, it is not the full simplex and it is the image of a parametrisation which maps a vector of parameters to a vector of products of edge labels,
\begin{equation}\label{eq:parametrisation}
\psi_{\T}:\Theta_{\T}\to\Delta_{n-1}^\circ,\quad \ttheta\mapsto \biggl(\prod_{e\in E(\lambda)}\theta(e)~|~\lambda\text{ root-to-leaf path in }\T\biggr).
\end{equation}
Even though the map \cref{eq:parametrisation} appears monomial, it is not and we cannot immediately conclude that its image is
a toric variety.
Centrally, $\psi_{\T}$ is not a map from $(\C^\ast)^{\sum_{v\in V}\#E(v)-1}$ to $(\C^\ast)^n$ because its domain is a product of simplices rather than the usual torus. One of the key contributions of this paper presented in \cref{thm:toric} is to give 
sufficient combinatorial and
algebraic conditions on the staged tree $\T$ for the image $\mathrm{im}(\psi_{\T})$ to be a toric variety. 

Staged trees can be thought of as probability trees together with conditional independence information  on the depicted events. Given that a unit in the population modelled by the tree arrives at a vertex then its immediate future unfoldings are independent of whether the unit arrived at that particular vertex or any other vertex in the same stage. In this sense, stages identify historical developments across a tree. In particular, if a tree depicts the product state space of a discrete vector of random variables, then every vertex corresponds to a random variable conditional on specific values taken by its ancestors, and an identification of probabilities attached to different vertices amounts to identifying rows of conditional probability tables. As a consequence, staged tree models include discrete and context-specific Bayesian networks as a special case \citep{Smith.Anderson.2008,Collazo.etal.2018}. In particular, staged trees are tools which can capture information about the state spaces of the underlying variables as well as \emph{all} conditional independence relationships between these purely graphically. Compare \cref{fig:radicaltree} on page~\pageref{fig:radicaltree} for an example of a staged tree which encodes conditional independence relationships that cannot be represented in a directed acyclic graph.

Because staged trees often encode a lot of symmetry, we will in \cref{sub:thmstar} introduce the concept of \emph{positions} which are stages with identical future developments. Remarkably, we find in \cref{thm:toric} and \cref{cor:simple} in that section that if all stages are positions then the parametrisation~\cref{eq:parametrisation} behaves like a monomial parametrisation. As a consequence, sum-to-$1$ conditions on the parameter space can be ignored and the resulting model is a toric variety inside a probability simplex: see also \cref{thm:implicit} in the next section.

Staged trees have now been successfully employed over a whole range of applications \citep{Freeman.Smith.2011b,Barclay.etal.2013a,Barclay.etal.2015,Collazo.Smith.2015}. \citet{Goergen.Smith.2017} characterise all different staged tree parametrisations of the same discrete model using symbolic nested representations
of a polynomial generating function.
This provides a framework to define two graphical operations which traverse the whole class of statistically equivalent staged trees, so all representations of the same model. 
An alternative approach to the question of statistical equivalence---or, in Bayesian networks, to characterise Markov equivalence---is to provide an \emph{implicit} rather than a parametric description of the model itself. All graphical representations whose corresponding distributions fulfil these implicit constraints are then statistically equivalent. We present such an algebraic characterisation and analyse its properties in this paper.

\begin{figure}
\centering
\begin{tikzpicture}
\renewcommand{\xx}{1.3}
\renewcommand{\yy}{0.5}
\node at (0,5*\yy) {$\T_1$};

\node (w0) at (0,2.5*\yy) {\stage{Green!70}{0}};
\node (w) at (1*\xx,4*\yy) {\stage{Thistle}{1}};
\node (ws) at (1*\xx,2.5*\yy) {\stage{Thistle}{2}};
\node (wu) at (1*\xx,1*\yy) {\stage{Thistle}{3}};

\node (a1) at (2*\xx,5*\yy) {\bt};
\node (a2) at (2*\xx,4*\yy) {\bt};
\node (a3) at (2*\xx,3*\yy) {\bt};
\node (a4) at (2*\xx,2*\yy) {\bt};
\node (a5) at (2*\xx,1*\yy) {\bt};
\node (a6) at (2*\xx,0*\yy) {\bt};

\draw[->] (w0) -- node [above] {\ls{$\tau_0$}} (w);
\draw[->] (w0) -- node [fill=white] {\ls{$\tau_1$}} (ws);
\draw[->] (w0) -- node [below] {\ls{$\tau_2$}} (wu);
\draw[->] (w) -- node [fill=white] {\ls{$\theta_1$}} (a2);
\draw[->] (w) -- node [above] {\ls{$\theta_0$}} (a1);
\draw[->] (ws) -- node [above, xshift=-5] {\ls{$\theta_0$}} (a3);
\draw[->] (ws) -- node [below, xshift=-5] {\ls{$\theta_1$}} (a4);
\draw[->] (wu) -- node [fill=white] {\ls{$\theta_0$}} (a5);
\draw[->] (wu) -- node [below] {\ls{$\theta_1$}} (a6);

\node [right, xshift=5] at (a1) {$p_1$};
\node [right, xshift=5] at (a2) {$p_4$};
\node [right, xshift=5] at (a3) {$p_2$};
\node [right, xshift=5] at (a4) {$p_5$};
\node [right, xshift=5] at (a5) {$p_3$};
\node [right, xshift=5] at (a6) {$p_6$};
\end{tikzpicture}%
\quad
\begin{tikzpicture}
\renewcommand{\xx}{1.3}
\renewcommand{\yy}{0.5}
\node at (0,5*\yy) {$\T_2$};

\node (v0) at (0,2.5*\yy) {\stage{Thistle}{0}};
\node (v) at (1*\xx,4*\yy) {\stage{Green!70}{1}};
\node (vf) at (1*\xx,1*\yy) {\stage{Green!70}{2}};

\node (a1) at (2*\xx,5*\yy) {\bt};
\node (a2) at (2*\xx,4*\yy) {\bt};
\node (a3) at (2*\xx,3*\yy) {\bt};
\node (a4) at (2*\xx,2*\yy) {\bt};
\node (a5) at (2*\xx,1*\yy) {\bt};
\node (a6) at (2*\xx,0*\yy) {\bt};

\draw[->] (v0) -- node [above] {\ls{$\theta_0$}} (v);
\draw[->] (v0) -- node [below] {\ls{$\theta_1$}} (vf);
\draw[->] (v) -- node [fill=white] {\ls{$\tau_1$}} (a2);
\draw[->] (v) -- node [above] {\ls{$\tau_0$}} (a1);
\draw[->] (v) -- node [below] {\ls{$\tau_2$}} (a3);
\draw[->] (vf) -- node [fill=white] {\ls{$\tau_1$}} (a5);
\draw[->] (vf) -- node [above] {\ls{$\tau_0$}} (a4);
\draw[->] (vf) -- node [below] {\ls{$\tau_2$}} (a6);

\node [right, xshift=5] at (a1) {$p_1$};
\node [right, xshift=5] at (a2) {$p_2$};
\node [right, xshift=5] at (a3) {$p_3$};
\node [right, xshift=5] at (a4) {$p_4$};
\node [right, xshift=5] at (a5) {$p_5$};
\node [right, xshift=5] at (a6) {$p_6$};
\end{tikzpicture}%
\quad
\begin{tikzpicture}
\renewcommand{\xx}{1.3}
\renewcommand{\yy}{0.5}
\node at (0,5*\yy) {$\T_3$};

\node (v0) at (0,2.5*\yy) {\stage{Thistle}{0}};
\node (v) at (1*\xx,4*\yy) {\stage{SkyBlue}{1}};
\node (vf) at (1*\xx,1*\yy) {\stage{SkyBlue}{2}};
\node (w) at (2*\xx,4.5*\yy) {\stage{Yellow}{3}};
\node (wf) at (2*\xx,1.5*\yy) {\stage{Yellow}{4}};

\node (a1) at (3*\xx,5*\yy) {\bt};
\node (a2) at (3*\xx,4*\yy) {\bt};
\node (a3) at (2*\xx,3*\yy) {\bt};
\node (a4) at (3*\xx,2*\yy) {\bt};
\node (a5) at (3*\xx,1*\yy) {\bt};
\node (a6) at (2*\xx,0*\yy) {\bt};

\draw[->] (v0) -- node [above] {\ls{$\theta_0$}} (v);
\draw[->] (v0) -- node [below] {\ls{$\theta_1$}} (vf);

\draw[->] (v) -- node [above] {\ls{$\sigma_0$}} (w);
\draw[->] (v) -- node [below] {\ls{$\sigma_1$}} (a3);
\draw[->] (vf) -- node [above] {\ls{$\sigma_0$}} (wf);
\draw[->] (vf) -- node [below] {\ls{$\sigma_1$}} (a6);

\draw[->] (w) -- node [above] {\ls{$\eta_0$}} (a1);
\draw[->] (w) -- node [fill=white] {\ls{$\eta_1$}} (a2);
\draw[->] (wf) -- node [above] {\ls{$\eta_0$}} (a4);
\draw[->] (wf) -- node [fill=white] {\ls{$\eta_1$}} (a5);

\node [right, xshift=5] at (a1) {$p_1$};
\node [right, xshift=5] at (a2) {$p_2$};
\node [right, xshift=5] at (a3) {$p_3$};
\node [right, xshift=5] at (a4) {$p_4$};
\node [right, xshift=5] at (a5) {$p_5$};
\node [right, xshift=5] at (a6) {$p_6$};
\end{tikzpicture}
\caption{Three staged probability trees where those vertices which are in the same staged are also assigned the same colour. All three of these represent the same statistical model: a discrete independence model on a binary and a ternary random variable. For $\T_{2}$, the parametrisation $
\psi_{\T_2}:\Delta_{2-1} \times \Delta_{2-1} \to \Delta_{6-1} $ is defined by 
$(\theta_0,\theta_1,\tau_0,\tau_1,\tau_2)\mapsto (\theta_0\tau_0,\theta_0\tau_1,\theta_0\tau_2,\theta_1\tau_0,\theta_1\tau_1,\theta_1\tau_2)
$. See \cref{ex:3trees} on page~\pageref{ex:3trees} for a full analysis.
} 
\label{fig:oddsratios}
\end{figure}

\section{An implicit characterisation of staged trees}\label{sect:implicit}

In this section, we employ our understanding of the statistical model represented by a staged tree to derive equations which define this model as an algebraic variety intersected with the open probability simplex. 

\subsection{Conditional probabilities and odds ratios}\label{sub:odds}

In Bayesian networks where a probability distribution factorises according to an acyclic directed graph, the atomic probabilities in the model can be written as a product of conditional probabilities depending on ancestor configurations: $p(x)=\prod_{i=1}^kp_i(x_i|x_{\pa(i)})$ where $\pa(i)$ denotes the parent set of vertex $i$ in the graph and $x$ is an atom in an underlying discrete product state space $\X_1\times\ldots\times\X_k$. These conditional probabilities locally sum to one as $\sum_{x_i\in\X_i}p_i(x_i|x_{\pa(i)})=1$ for all $i=1,\ldots,k$.
In the same fashion, we can read the distribution $\ptheta(\lambda)=\prod_{e\in E(\lambda)}\theta(e)$ over root-to-leaf paths $\lambda$ in a probability tree as a product of conditional probabilities where every label $\theta(e)$ of an edge $e=(v,v')$ denotes the transition probability of moving on to $v'$ given arrival at $v$. We show below why this is so and how this interpretation gives rise to an intuitive interpretation of stage identifications.

For clarity, we henceforth denote atomic probabilities as $p_i=\ptheta(\lambda_i)$ for root-to-leaf paths $\lambda_i$ numbered as $i=1,\ldots,n$. We write $[v]\subseteq\{1,\ldots,n\}$ for the indices of those root-to-leaf paths which pass through a fixed vertex $v\in V$ and we abbreviate the sum of their corresponding atomic probabilities to $p_{[v]}=\sum_{i\in [v]}p_i$. Thus, $p_{[v]}$ is simply the probability of the event \enquote{passing through $v$}. Whenever a staged tree is used as an equivalent representation of a Bayesian network, this type of vertex-centred event corresponds to a margin of the bigger model \citep{Collazo.etal.2018}. Paths in a tree graph which are not root-to-leaf paths are throughout denoted by their head and tail, for instance $v\to w$ for a path from $v$ to $w$. In a tree graph every such path between two vertices is unique and for our purposes we can usually ignore its directionality. The set of edges a path $v\to w$ passes through can then be denoted as $E(v\to w)\subseteq E$. The root vertex of $\T$ is always denoted by $v_0\in V$. 
\medskip

Two useful properties of $p_{[v]}$ will be employed over the following sections, for $v\in V$.
First, because of the additivity of the underlying probability measure, we can split each such probability into the sum of probabilities measured at the children of $v$:
\begin{equation}\label{eq:children}
p_{[v]}=\sum_{(v,v')\in E(v)}p_{[v']}.
\end{equation} 
This naturally follows the branching of the tree graph at each vertex and can be translated into graphical-model language as \enquote{the probability of a margin equals the sum of the probabilities of its margins}. Second, we can either recursively employ this first observation or we can directly write each atomic probability in terms of its parametrisation to find that every probability $p_{[v]}$ can be written in terms of a polynomial associated to a subtree of the bigger probability tree. To state this fact let thus $\T(v)$ denote the induced subtree of $\T$ which is rooted at $v$ and whose root-to-leaf paths correspond to $v$-to-leaf paths in $\T$. Let $w_1,\ldots,w_k$ denote the leaves of $\T(v)$, for $k\leq n$. Because $\T(v)$ is itself a probability tree, it has an associated probability distribution which for now we write as~$q$. Throughout this text, we denote by $t(v)=\sum_{j=1}^kq_{[w_j]}=\sum_{j=1}^k\prod_{e\in E(v\to w_j)}\theta(e)$ the sum of all atomic probabilities in $\T(v)$. When imposing sum-to-one conditions, by construction we have that $t(v)=1$ for all $v\in V$. This notation enables us to elegantly write
\begin{equation}\label{eq:tv}
p_{[v]}=\prod_{e\in E(v_0\to v)}\theta(e)\cdot t(v)=\prod_{e\in E(v_0\to v)}\theta(e)
\end{equation}
which implies that each $p_{[v]}$ simply equals the probability arriving at the vertex $v\in V$.

We can now easily derive the following fact.

\begin{lemma} \label{lem:condprobs}
The probability label of every edge $(v,v')\in E$ is a fraction of sums of atomic probabilities
\begin{equation}
\label{eq:condprobs}
\theta(v,v')=\dfrac{p_{[v']}}{p_{[v]}}
\end{equation}
and is the conditional probability of transitioning from $v$ to $v'$.
\end{lemma}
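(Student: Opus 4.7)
The plan is to combine the closed-form expression from \cref{eq:tv} with the uniqueness of paths in a rooted tree. First I would invoke the sum-to-one conditions implicit in the parameter space $\Theta_{\T}$: by construction $t(v)=1$ for every $v\in V$, so \cref{eq:tv} simplifies to $p_{[v]}=\prod_{e\in E(v_0\to v)}\theta(e)$. Since $\T$ is a tree, for any edge $(v,v')\in E$ the unique root-to-vertex path $v_0\to v'$ factors as the path $v_0\to v$ followed by the single edge $(v,v')$, so $E(v_0\to v')=E(v_0\to v)\cup\{(v,v')\}$. Substituting this into the expression above yields
$$p_{[v']} \;=\; \Bigl(\prod_{e\in E(v_0\to v)}\theta(e)\Bigr)\cdot\theta(v,v') \;=\; p_{[v]}\cdot\theta(v,v').$$
Because every edge label lies in $(0,1)$, the product $p_{[v]}$ is strictly positive, so dividing through gives the claimed identity $\theta(v,v')=p_{[v']}/p_{[v]}$.

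For the second assertion I would appeal directly to the probabilistic interpretation of $p_{[v]}$ established just before the lemma: $p_{[v]}$ is the probability of the event \enquote{a unit passes through $v$} under $\ptheta$. Since $v'$ is a child of $v$, every root-to-leaf path through $v'$ also passes through $v$, so the event \enquote{through $v'$} is contained in \enquote{through $v$}. By the elementary definition of conditional probability, the ratio $p_{[v']}/p_{[v]}$ is then $\ptheta(\text{through }v'\mid\text{through }v)$, which is exactly the one-step transition probability encoded by the edge label $\theta(v,v')$.

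The main (minor) obstacle is purely notational: one must carefully unfold the definition of $E(v_0\to v)$ for both $v$ and its child $v'$, and explicitly invoke the sum-to-one conditions to pass from the general formula $p_{[v]}=\prod_{e\in E(v_0\to v)}\theta(e)\cdot t(v)$ to the form without the normalising factor $t(v)$. Once these are in place the identity is a single line of algebra in the polynomial ring of edge labels, and the conditional-probability interpretation is a direct consequence of event inclusion along the tree.
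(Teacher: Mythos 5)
Your argument is correct and is essentially the paper's own proof: both rest on \cref{eq:tv} together with the sum-to-one normalisation $t(v)=t(v')=1$, the only cosmetic difference being that you derive $p_{[v']}=p_{[v]}\theta(v,v')$ and divide (noting positivity), whereas the paper writes the ratio $p_{[v']}/p_{[v]}$ directly and cancels the common factor $\prod_{e\in E(v_0\to v)}\theta(e)$. Your added remark on the conditional-probability interpretation via event inclusion matches the discussion preceding the lemma.
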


\begin{proof}
We simply observe that by \cref{eq:tv}, the fraction \cref{eq:condprobs} is equal to
\begin{equation}
\dfrac{\sum_{j\in[v']}p_{j}}{\sum_{i\in[v]}p_{i}}
=\dfrac{\prod_{e'\in E(v_0\to v)}\theta(e')\cdot \theta(v,v')\cdot t(v')}{\prod_{e\in E(v_0\to v)}\theta(e)\cdot t(v)}
\end{equation} 
where $t(v)$ and $t(v')$ are equal to one. These fractions are well defined because in probability trees all labels are strictly positive by definition.
\end{proof}

A straightforward implication of the above lemma is the following new result, translating a stage identification of conditional probabilities into a collection of odds-ratio equations.

\begin{lemma}\label{lem:oddsratios}
Two vertices $v,w\in V$ are in the same stage, $v\sim w$, if and only if the equation of atomic probabilities
\begin{equation}\label{eq:oddsratios}
p_{[v']}p_{[w]}=p_{[w']}p_{[v]}
\end{equation}
is true for any two edges $(v,v')\in E(v)$ and $(w,w')\in E(w)$ which share the same label.
\end{lemma}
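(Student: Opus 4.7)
The plan is to derive \cref{eq:oddsratios} directly from \cref{lem:condprobs}, which already gives us the edge labels as ratios of sums of atomic probabilities. The positivity assumption on $\ttheta$ guarantees that all $p_{[v]}$ are strictly positive, so we can freely pass between ratios and cross-multiplied products.

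For the forward direction, I would assume $v\sim w$ and pick any pair of edges $(v,v')\in E(v)$ and $(w,w')\in E(w)$ sharing the same label, meaning $\theta(v,v')=\theta(w,w')$ by the definition of stage. Applying \cref{lem:condprobs} to each of these edges gives
\begin{equation*}
\frac{p_{[v']}}{p_{[v]}}=\theta(v,v')=\theta(w,w')=\frac{p_{[w']}}{p_{[w]}},
\end{equation*}
and cross-multiplying yields \cref{eq:oddsratios}.

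For the converse, I would assume \cref{eq:oddsratios} holds for every matched pair of emanating edges. Using positivity to divide through, \cref{lem:condprobs} again gives $\theta(v,v')=\theta(w,w')$ for each such pair. Because this equality holds for \emph{every} pair of same-label edges, the vectors $\ttheta_v$ and $\ttheta_w$ coincide up to the permutation implicitly fixed by the label matching, which is exactly the definition of $v\sim w$.

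I do not expect a real obstacle here: the result is essentially a cross-multiplication of the previous lemma. The only point that needs a careful word is that the phrase \enquote{share the same label} in the hypothesis is precisely the pairing of entries of $\ttheta_v$ and $\ttheta_w$ used in the \enquote{up to a permutation} clause of the stage definition, so that a quantification over all such matched pairs recovers the full equality of parameter vectors.
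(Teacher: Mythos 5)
Your proposal is correct and follows essentially the same route as the paper's own proof: both directions reduce to \cref{lem:condprobs} together with the definition of a stage as an identification of the label vectors $\ttheta_v=\ttheta_w$, with cross-multiplication (justified by positivity of the $p_{[v]}$) passing between the ratio form and \cref{eq:oddsratios}. Your added remarks on positivity and on the label-matching fixing the permutation merely make explicit what the paper's two-line argument leaves implicit.
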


\begin{proof}
Validity of the statement can simply be seen by noting that two vertices are in the same stage $v\sim w$ if and only if their edge labels are identified $\theta(v,v')=\theta(w,w')$ for all outgoing edges $(v,v')\in E(v)$ and $(w,w')\in E(w)$. Plugging these equations into \cref{eq:condprobs} yields the claim.
\end{proof}

An equivalent statement has been proven by \citet[Proposition~4.1.6]{Sullivant.2018} for acyclic digraphs but not for more general discrete statistical models, so in particular not for discrete context-specific conditional independence models and not for staged trees. We will fill this gap below, stating for the first time equations defining these models. We also dedicate a large part of the analysis in \cref{sect:nontoric} to the study of the exact role played by the sum-to-one conditions inherent to both the parametrisation and the associated distribution of a probability tree. These conditions are neglected by \citet{Geiger.etal.2006} who make no reference to these subtleties, but for instance \citet{Allman.Rhodes.2008} or \citet{Casanellas.Fernandez.2008} provide a successful treatment of semi-algebraic conditions in the phylogenetics literature.

Following \citet{Drton.etal.2009}, we henceforth call the ideal associated to a staged tree via the polynomials in \cref{lem:oddsratios} its \emph{ideal of model invariants}, denoted
\begin{equation}
I_{\T}=\ideal{p_{[v]}p_{[w']}-p_{[v']}p_{[w]}~|~\text{ for all }v\sim w\text{ and all }(v,v'),(w,w')\in E\text{ with the same label}}.
\end{equation}
In analogy, we call the variety $\var(I_{\T})=\{x~|~f(x)=0\text{ for all }f\in I_{\T}\}$ its \emph{model variety}. 

A direct consequence of \cref{lem:condprobs,lem:oddsratios} is the following implicit characterisation of a staged tree model as its model variety intersected with the probability simplex.

\begin{theorem}\label{thm:implicit}
Let $\treemodel$ be a staged tree model represented by a tree $\T=(V,E)$ with $n$ root-to-leaf paths and with associated ideal of model invariants $I_{\T}$. Then $\treemodel=\var(I_{\T})\cap\Delta_{n-1}^\circ$.
%
\end{theorem}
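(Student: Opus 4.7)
The plan is to prove the two inclusions separately, using the lemmata already established in the section. The forward inclusion $\treemodel\subseteq\var(I_{\T})\cap\Delta_{n-1}^\circ$ is essentially a direct consequence of \cref{lem:oddsratios}, together with the fact that the image of $\psi_{\T}$ lies in $\Delta_{n-1}^\circ$ by construction of the parametrisation. Specifically, given $p=\psi_{\T}(\ttheta)$, positivity of the coordinates follows from $\theta(e)\in(0,1)$, and the sum-to-one property $\sum_i p_i=1$ reduces, via \cref{eq:tv}, to $t(v_0)=1$, which holds under the local sum-to-1 conditions defining $\Theta_{\T}$. That $p$ lies in $\var(I_{\T})$ is exactly the "if" direction of \cref{lem:oddsratios}.

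For the reverse inclusion, I would take an arbitrary $p\in\var(I_{\T})\cap\Delta_{n-1}^\circ$ and explicitly construct a parameter vector $\ttheta\in\Theta_{\T}$ with $\psi_{\T}(\ttheta)=p$. The natural candidate, suggested by \cref{lem:condprobs}, is to define
\begin{equation}
\theta(v,v')=\dfrac{p_{[v']}}{p_{[v]}}\quad\text{for every edge }(v,v')\in E.
\end{equation}
Because $p\in\Delta_{n-1}^\circ$ all the $p_i$ are strictly positive, so $p_{[v]}>0$ for every $v\in V$ and these ratios are well-defined positive numbers. I would verify the parameter-space constraints in turn: positivity follows from $p_{[v']}<p_{[v]}$, which in turn follows from \cref{eq:children}; the local sum-to-one condition $\sum_{(v,v')\in E(v)}\theta(v,v')=1$ is also immediate from \cref{eq:children}; and the stage identifications $\ttheta_v=\ttheta_w$ (up to permutation) whenever $v\sim w$ translate, via cross-multiplication, into exactly the generating polynomials of $I_{\T}$ evaluated at $p$, so they hold because $p\in\var(I_{\T})$.

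Finally, I would check that $\psi_{\T}(\ttheta)=p$ coordinate by coordinate. For a root-to-leaf path $\lambda_i$ with leaf $\ell_i$, the product $\prod_{(v,v')\in E(\lambda_i)}\theta(v,v')=\prod_{(v,v')\in E(\lambda_i)}p_{[v']}/p_{[v]}$ telescopes along the path to $p_{[\ell_i]}/p_{[v_0]}=p_i/1=p_i$, where we use $p_{[v_0]}=\sum_i p_i=1$ by the simplex condition and $p_{[\ell_i]}=p_i$ since a leaf lies on a unique root-to-leaf path. This closes the argument.

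The main obstacle I expect is bookkeeping around the stage identification: the definition of "same stage" allows matching of emanating edges only up to a permutation of components of $\ttheta_v$, so one has to be precise about what "edges sharing the same label" means and about checking that the odds-ratio relations in $I_{\T}$ exactly recover $\theta(v,v')=\theta(w,w')$ for the correctly paired-up edges. Everything else is either a direct appeal to \cref{lem:condprobs} and \cref{lem:oddsratios} or a short telescoping computation.
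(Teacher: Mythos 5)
Your proposal is correct and follows the same route the paper intends: the theorem is presented there as a direct consequence of \cref{lem:condprobs} and \cref{lem:oddsratios}, and your two inclusions---the forward one via the ``if'' direction of \cref{lem:oddsratios} plus $t(v_0)=1$, and the reverse one by reconstructing $\theta(v,v')=p_{[v']}/p_{[v]}$, checking the simplex and stage constraints via \cref{eq:children} and the vanishing generators, and telescoping along root-to-leaf paths---simply spell out that argument in full detail. No gaps; the permutation bookkeeping you flag is handled correctly since the tree's edge labelling fixes the pairing of edges across staged vertices.
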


Thus staged tree models can either be directly specified using a staged tree representation---and hence an explicit parametrisation---or implicitly using the generators of the ideal of model invariants as  above. We discuss the implications of this result for statistical inference below and will then in the subsequent section move on to analysing its geometric properties.

\begin{example}\label{ex:3trees}
All of the staged trees $\T_1,\T_2,\T_3$ in \cref{fig:oddsratios} represent the same statistical model $\M_\T$. As a consequence, the closed images of all of the parametrisations $\psi_{\T_{i}}$ inside the probability simplex $\Delta_{6-1}$ are the same even though the parametrisation $\psi_{\T_{3}}$ is different from $\psi_{\T_{1}}=\psi_{\T_{2}}$.

The different staged tree representations of $\M_{\T}$ give three ideals of model invariants  with different sets of generators, namely,
\begin{equation}
\begin{split}
I_{\T_{1}}&= \ideal{p_1p_5-p_2p_4,p_1p_6-p_3p_4,p_2p_6-p_3p_5},\\
I_{\T_{2}}&=\ideal{p_{1}(p_{5}+p_{6})-p_{4}(p_{2}+p_{3}),p_{2}(p_{4}+p_{6})-p_{5}(p_{1}+p_{3}),p_{3}(p_{4}+p_{5})-p_{6}(p_{2}+p_{1})},\\
I_{\T_{3}}&= \ideal{p_1p_5-p_4p_2,p_3(p_4+p_5)-p_6(p_1+p_2)}.
\end{split}
\end{equation}
These are readily obtained using \cref{lem:oddsratios,thm:implicit}.

We can see here from the parametrisation $\psi_{\T_1}$, and hence from the staged tree $\T_1$, that $\M_{\T}$ is a toric variety: we simply read the binomial generators of $I_{\T_{1}}$ off the graph using \cref{lem:oddsratios}. From $\T_2$ and $\T_3$ it is not obvious that the model is toric, even though, of course, using \cref{thm:implicit} we can implicitly define $\M_{\T}$ as the vanishing of \emph{any} of the above ideals $I_{\T_{i}}$ intersected with the probability simplex, $i=1,2,3$.

We will see in \cref{cor:intersection} that the closed image of $\psi_{\T_{i}}$ in $\R^{6}$ is an irreducible component of all the varieties $\var(I_{\T_{i}})$.  The three respective ideals of model invariants have the decomposition
\begin{equation}
I_{\T_{1}}=\ker\varphi,\quad I_{\T_{2}}=\ker\varphi\cap \ideal{p_{4}+p_{5}+p_{6},p_{1}+p_{2}+p_{3}},\quad I_{\T_{3}}=\ker\varphi\cap \ideal{p_{1}+p_{2},p_{4}+p_{5}}.
\end{equation}
In algebraic language,
this simply means that $\ker\varphi$ is an associated minimal prime of $I_{\T_{i}}$ for $i=1,2,3$.
\end{example}

We make two key observations on \cref{thm:implicit}.

First, in statistical inference the generators of the ideal of model invariants specified in \cref{thm:implicit} is well known. A ratio of probabilities is often called an \emph{odds ratio} and identifications of odds ratios are frequently used tools in Bayesian inference. For instance, under certain conditions it can be favourable to elicit odds ratios  rather than probability distributions in order to specify a model using domain expertise \citep{Garthwaite.etal.2005}. Odds ratios naturally appear when analysing conditional independences in contingency tables \citep{Altham.1969,Altham.1970b,Altham.1970a} and models determined by this type of constraints are now well studied. Because of the correspondence of odds-ratio equations to vanishing $2\times 2$ minors of contingency tables, these results have also been of interest to the community of algebraic statistics \citep[and references therein]{Drton.etal.2009}. 

Second, in contrast to other implicit model characterisations---such as those obtained by \citet{Geiger.etal.2006} which are reviewed and extended in the following section---in the present work we give the odds-ratio equations of a staged tree model a combinatorial description that can be directly read from the tree as seen in \cref{ex:3trees}. This is the strong point of staged trees: not only do they provide a purely graphical tool to fully specify a very general statistical model but they \emph{also} provide a purely graphical tool from which we can immediately read all of the equations defining that model. This is not the case for the usual directed (and undirected) graphical models. We reiterate this point in an example developed over \cref{sect:example} which compares algebraic characterizations of different statistical models.
Furthermore, the characterization in \cref{thm:implicit} of the distributions in the interior of the probability simplex which factor according to the model can be interpreted as a version of the Hammersley-Clifford theorem for staged trees. Thus when checking whether any set of atomic probabilities factorises according to a given tree, we only need to substitute the given values into the equation specified in \cref{thm:implicit} and check whether these evaluate to zero. A discussion of this procedure can be found in \citet[Section~2.3]{Goergen.2017} and its use in model selection is discussed in \citet{Geiger.etal.2001}.

\subsection{The ideal of model invariants}\label{sub:modelideal}

The polynomial odds-ratio characterisation provided by \cref{thm:implicit} enables us to employ the language of algebraic geometry to characterise staged tree models. Decomposable graphical statistical models have in the same fashion already been successfully characterised as toric varieties \citep{Pistone.etal.2001a,Geiger.etal.2006}. Because staged tree models contain decomposable models as a special case, we can now easily extend these results and verify our advancement in a well-studied context. This section provides the foundation to do so in \cref{sect:nontoric}.
\medskip

Before proceeding into a study of their geometry, we note two properties which make staged tree models special from an algebraic viewpoint.

First, by \cref{thm:implicit} and because of the \enquote{if and only if} in \cref{lem:oddsratios}, in an implicit characterisation of staged tree models there are no inequality constraints other than those coming from the probability simplex. As a consequence, staged tree models really are algebraic varieties inside the probability simplex. This makes them pleasingly easy to handle with algebraic tools without diverting into the domain of real semi-algebraic geometry where the notion of closure might force us to handle algebraic approximations of a model rather than the model itself. The presence of hard-to-characterise inequality constraints has been a big challenge in many of the recent attempts to tackle statistical problems using algebra tools. 

Second, every staged tree model is the image of a parametrisation \cref{eq:parametrisation} whose domain is a product of probability simplices. If we extend the domain of definition of $\psi_{\T}$ to the full space $(\C^\ast)^{\sum_{v\in V}\#E(v)-1}$---retaining stage identifications but ignoring sum-to-$1$ conditions and positivity---then this map becomes monomial and its image a toric variety: compare the discussion on page~\pageref{eq:parametrisation}.

In order to be able to distinguish the cases where sum-to-1 conditions can  be ignored and where they can not, we introduce the following notation for model parametrisations.
We henceforth denote by $\R[p]=\R[p_1,\ldots,p_n]$ the polynomial ring whose indeterminates are given by atomic probabilities $p_1,\ldots,p_n$. We denote by $\R[\Theta]=\R[\theta(e)|e\in E]$ the polynomial ring whose indeterminates are given by the edge labels of a given staged tree $\T=(V,E)$.
Then the algebraic analogue of the map \cref{eq:parametrisation} is simply the ring map
\begin{equation}\label{eq:phi}
\begin{split}
\varphi :~&\R[p_{1},\ldots,p_{n}]\to \R[\Theta]/\ideal{\theta-1}\\
&p_i\mapsto \prod_{e\in E(\lambda_i)}\theta(e) \qquad\text{for all }i=1,\ldots,n
\end{split}
\end{equation}
where we use the shorthand $\ideal{\theta-1}=\ideal{\sum_{e\in E(v)}\theta(e)-1~|~v\in V}$ to denote the ideal coding the local vertex sum-to-$1$ conditions of the probability tree. These conditions imply two properties of this map. First, $\varphi$ is not a monomial map and hence $\ker\varphi$ is in general not an affine toric variety. 
Second, the polynomial $p_1+\ldots+p_n-1$ is in the kernel of $\varphi$. Thus, following common practice in algebraic statistics, we can always consider $\ker\varphi$ as a homogeneous ideal in projective space. For a longer discussion of this subtle point we refer the reader to \citet[Section~3.6]{Sullivant.2018}.

The ring map given by a probability tree parametrisation \cref{eq:parametrisation} which ignores the local sum-to-1 conditions is indeed a monomial parameterization and can simply be written as
\begin{equation}\label{eq:phitoric}
\begin{split}
\phitoric :~&\R[p_{1},\ldots,p_{n}]\to \R[\Theta]\\
&p_i\mapsto \prod_{e\in E(\lambda_i)}\theta(e) \qquad\text{for all }i=1,\ldots,n.
\end{split}
\end{equation}
The kernel of $\phitoric$ is a toric ideal.
We will analyse the relation between the kernels of the two maps \cref{eq:phi,eq:phitoric} in the following section, with a strong focus on the role played by the local sum-to-$1$ conditions. In particular, \cref{thm:toric} will give necessary graphical and algebraic conditions for the equality $\ker \varphi=\ker \phitoric$ to be true. Formulating 
 sufficient conditions for $\mathrm{im}(\psi_{\T})$ to be a toric variety  is a more subtle point in toric geometry that we
 do not address in this paper.  

Throughout the remainder of this text, we call a staged tree model toric if and only if the kernel of the associated algebraic parametrisation \cref{eq:phi} is a toric ideal. In this case, we automatically have that the kernels are equal, $\ker \varphi=\ker \phitoric$.
\Cref{fig:radicaltree} gives two examples of staged trees which are Bayesian networks that are not decomposable, one of these is clearly toric, and one example which is not a Bayesian network and not immediately seen as toric.

\begin{figure}
\centering
\begin{tikzpicture}
\renewcommand{\xx}{1.3}
\renewcommand{\yy}{0.3}

\node at (-0.5*\xx,15*\yy) {$\T_1$};

\draw [dashed]         (-0.15*\xx,-0.5*\yy) rectangle (0.15*\xx,15.5*\yy);
\draw [dashed]         (0.85*\xx,-0.5*\yy) rectangle (1.15*\xx,15.5*\yy);
\draw [dashed]         (1.85*\xx,-0.5*\yy) rectangle (2.15*\xx,15.5*\yy);

\node at (0*\xx,16.3*\yy) {$X_1$};
\node at (1*\xx,16.3*\yy) {$X_2$};
\node at (2*\xx,16.3*\yy) {$X_3$};
                        
\node (v0) at (0*\xx,7.5*\yy) {\stage{Yellow}{0}};
\node (v1) at (1*\xx,11.5*\yy) {\stage{Red!70}{1}};
\node (v2) at (1*\xx,3.5*\yy) {\stage{Red!70}{2}};
\node (v3) at (2*\xx,13.5*\yy) {\stage{SkyBlue}{3}};
\node (v4) at (2*\xx,9.5*\yy) {\stage{Green!60}{4}};
\node (v5) at (2*\xx,5.5*\yy) {\stage{SkyBlue}{5}};
\node (v6) at (2*\xx,1.5*\yy) {\stage{Green!60}{6}};
\node (v7) at (3*\xx,14.5*\yy) {\bt};
\node (v8) at (3*\xx,12.5*\yy) {\bt};
\node (v9) at (3*\xx,10.5*\yy) {\bt};
\node (v10) at (3*\xx,8.5*\yy) {\bt};
\node (v11) at (3*\xx,6.5*\yy) {\bt};
\node (v12) at (3*\xx,4.5*\yy) {\bt};
\node (v13) at (3*\xx,2.5*\yy) {\bt};
\node (v14) at (3*\xx,0.5*\yy) {\bt};

\draw [->]         (v0) -- node [above] {\ls{$\theta_0$}} (v1);
\draw [->]         (v0) -- node [below] {\ls{$\theta_1$}} (v2);
\draw [->]         (v1) -- node [above] {\ls{$\tau_0$}} (v3);
\draw [->]         (v1) -- node [below] {\ls{$\tau_1$}} (v4);
\draw [->]         (v2) -- node [above] {\ls{$\tau_0$}} (v5);
\draw [->]         (v2) -- node [below] {\ls{$\tau_1$}} (v6);

\draw [->]         (v3) -- node [above] {\ls{$\sigma_0$}} (v7);
\draw [->]         (v3) -- node [below] {\ls{$\sigma_1$}} (v8);
\draw [->]         (v4) -- node [above] {\ls{$\eta_0$}} (v9);
\draw [->]         (v4) -- node [below] {\ls{$\eta_1$}} (v10);
\draw [->]         (v5) -- node [above] {\ls{$\sigma_0$}} (v11);
\draw [->]         (v5) -- node [below] {\ls{$\sigma_1$}} (v12);
\draw [->]         (v6) -- node [above] {\ls{$\eta_0$}} (v13);
\draw [->]         (v6) -- node [below] {\ls{$\eta_1$}} (v14);
\end{tikzpicture}%
\quad
\begin{tikzpicture}
\renewcommand{\xx}{1.3}
\renewcommand{\yy}{0.3}

\node at (0*\xx,15*\yy) {$\T_2$};
                        
\node (v0) at (0*\xx,7.5*\yy) {\stage{Orange!20}{0}};
\node (v1) at (1*\xx,11.5*\yy) {\stage{BlueViolet!60}{1}};
\node (v2) at (1*\xx,3.5*\yy) {\stage{BlueViolet!60}{2}};
\node (v3) at (2*\xx,13.5*\yy) {\stage{Dandelion}{3}};
\node (v4) at (2*\xx,9.5*\yy) {\stage{ForestGreen}{4}};
\node (v5) at (2*\xx,5.5*\yy) {\stage{Brown!80}{5}};
\node (v6) at (2*\xx,1.5*\yy) {\stage{Thistle}{6}};
\node (v7) at (3*\xx,14.5*\yy) {\bt};
\node (v8) at (3*\xx,12.5*\yy) {\bt};
\node (v9) at (3*\xx,10.5*\yy) {\bt};
\node (v10) at (3*\xx,8.5*\yy) {\bt};
\node (v11) at (3*\xx,6.5*\yy) {\bt};
\node (v12) at (3*\xx,4.5*\yy) {\bt};
\node (v13) at (3*\xx,2.5*\yy) {\bt};
\node (v14) at (3*\xx,0.5*\yy) {\bt};

\draw [->]         (v0) -- node [above] {\ls{$\theta_0$}} (v1);
\draw [->]         (v0) -- node [below] {\ls{$\theta_1$}} (v2);
\draw [->]         (v1) -- node [above] {\ls{$\tau_0$}} (v3);
\draw [->]         (v1) -- node [below] {\ls{$\tau_1$}} (v4);
\draw [->]         (v2) -- node [above] {\ls{$\tau_0$}} (v5);
\draw [->]         (v2) -- node [below] {\ls{$\tau_1$}} (v6);

\draw [->]         (v3) -- node [above] {\ls{$\sigma_0$}} (v7);
\draw [->]         (v3) -- node [below] {\ls{$\sigma_1$}} (v8);
\draw [->]         (v4) -- node [above] {\ls{$\eta_0$}} (v9);
\draw [->]         (v4) -- node [below] {\ls{$\eta_1$}} (v10);
\draw [->]         (v5) -- node [above] {\ls{$\nu_0$}} (v11);
\draw [->]         (v5) -- node [below] {\ls{$\nu_1$}} (v12);
\draw [->]         (v6) -- node [above] {\ls{$\mu_0$}} (v13);
\draw [->]         (v6) -- node [below] {\ls{$\mu_1$}} (v14);
\end{tikzpicture}%
\quad
\begin{tikzpicture}
\renewcommand{\xx}{1.3}
\renewcommand{\yy}{0.3}

\node at (0,15*\yy) {$\T_3$};

\node (v0) at (0,7.5*\yy) {\stage{Red}{0}};
\node (v1) at (\xx,9.5*\yy) {\stage{Red}{1}};
\node (v2) at (\xx,5.5*\yy) {\bt};
\node (v3) at (2*\xx,10.5*\yy) {\bt};
\node (v4) at (2*\xx,7.5*\yy) {\bt};

\phantom{\draw (0,-0.5*\yy) -- (1,-0.5*\yy);}

\draw [->]         (v0) -- node [above] {$\theta_0$} (v1);
\draw [->]         (v0) -- node [below] {$\theta_1$} (v2);
\draw [->]         (v1) -- node [above] {$\theta_0$} (v3);
\draw [->]         (v1) -- node [below] {$\theta_1$} (v4);
\end{tikzpicture}
\caption{Three staged trees. $\T_1$ represents the conditional independence model ${X_1\independent X_2}$ and ${X_3\independent X_1~|~X_2}$ also studied in \citet{Garcia.etal.2005}. This model cannot be faithfully represented by a directed acyclic graph, it is not decomposable but toric. Indeed, $\mathcal{M}_1=\var(p_{1}p_{7}-p_{5}p_{3}, p_{1}p_{8}-p_{5}p_{4},p_{2}p_{7}-p_{6}p_{3},p_{2}p_{8}-p_{6}p_{4},p_{1}p_{6}-p_{5}p_{2},p_{3}p_{8}-p_{7}p_{4})\cap\Delta_{8-1}^\circ$. The staged tree $\T_2$ represents the non-decomposable collider Bayesian network $\mathcal{M}_2=\var((p_1+p_2)(p_7+p_8)-(p_3+p_4)(p_5+p_6))\cap\Delta_{8-1}^\circ$. The staged tree $\T_3$ represents the model $\mathcal{M}_3=\var(p_1p_3=p_2(p_1+p_2))\cap\Delta_{3-1}^\circ$ which is not a Bayesian network.}\label{fig:radicaltree}
\end{figure}

\section{Properties of staged tree models} \label{sect:nontoric}

In this section, we explore algebraic properties such as dimension of the model variety of a general staged tree. We state these precisely in the language of commutative algebra. In particular, we find that the ideal of model invariants easily links to the kernel of the associated algebraic parametrisation \cref{eq:phi} via saturation.

\subsection{Dimension of $\treemodel$}\label{sub:dim}
The dimension of $\treemodel$ specified in \cref{thm:implicit} can be inferred from the tree graph $\T$. This is because the parametrisation \cref{eq:parametrisation} between the product parameter space $\Theta_{\T}$ and the corresponding model $\treemodel$ is birational onto its image: see \cref{lem:condprobs} and compare \cref{thm:algebra} in the subsequent section. As a consequence, the dimension of $\treemodel$ is equal to the dimension of $\Theta_{\T}$, so equal to the number of free parameters in the statistical model.

In particular, assuming that the vertices of a tree graph $\T=(V,E)$ are partitioned into $r$ equivalence classes, each class corresponding to one stage (or one assignment of a colour), letting $m_i$ denote the number of vertices which are assigned the $i^\text{th}$ colour and letting $k_i$ be their number of edges, $i=1,\ldots,r$, we can write the parameter space $\Theta_{\T}$ as the product of probability simplices ${\times_{i=1}^r\Delta_{k_i-1}^\circ\subseteq\R^{\#E}}$. This space is of dimension ${\sum_{i=1}^r(k_i-1)}$. Equivalently, it can easily be seen that the total number of free parameters is equal to the number of parameters labelling the edges of the tree, minus sum-to-1 conditions and stage assignments: 
\begin{equation}
d=\#E-\#V'-\sum_{i=1}^r(m_i-1)(k_i-1)
\end{equation}
where $V'\subsetneq V$ denotes the set of non-leaf vertices of the tree. The equation $d=\sum_{i=1}^r(k_i-1)$ is true because $\#V'=\sum_{i=1}^rm_i$ and $\#E=\sum_{i=1}^rm_ik_i$.

\begin{example}
The model represented by the staged tree $\T_2$ in \cref{fig:radicaltree} is possibly not toric in the atomic probabilities. It is of dimension six, of codimension one, and its ideal of model invariants is $I_{\T_2}=\ker \varphi= \langle(p_1+p_2)(p_7+p_8)-(p_3+p_4)(p_5+p_6)\rangle $. 
In this case, the kernel of the map $\phitoric$ is empty because the atomic probabilities of this model do not satisfy any relations unless local sum-to-1 conditions are imposed on the parameters. 

However, if in $\T_2$ we colour the vertices $v_3,v_5$ blue and $v_4,v_6$ green, we obtain $\T_1$ and thus
\begin{equation}
\ker \varphi =\ker \phitoric = \langle p_{1}p_{7}-p_{5}p_{3}, p_{1}p_{8}-p_{5}p_{4},p_{2}p_{7}-p_{6}p_{3},p_{2}p_{8}-p_{6}p_{4},p_{1}p_{6}-p_{5}p_{2},p_{3}p_{8}-
  p_{7}p_{4}\rangle
\end{equation}
which is a toric ideal whose corresponding variety is of dimension four.
\end{example}

%
\subsection{Local properties of $I_{\T}$}

In \cref{thm:algebra}, the main result of this section, we state \cref{thm:implicit} in the language of commutative algebra. This result is then used to clarify the close relation between $I_\T$ and $\ker\varphi$ in terms of local rings.
\medskip

In general, we always have that the ideal of model invariants $I_{\T}\subset \ker \varphi$ is contained in the kernel of its algebraic parametrisation \cref{eq:phi}, or equivalently, that the model variety $\var(I_{\T})$ contains the closed image of $\varphi$. However, as was illustrated in \cref{ex:3trees}, equality does not always hold and different tree representations of $\T$ lead to different
primary decomposition of the ideals $I_{\T}$. 

Let henceforth $\mathbf{p}\in\R[p_1,\ldots,p_n]$ denote the product of all the denominators which appear in the identified conditional probabilities \cref{eq:condprobs} in a staged tree.

\begin{proposition}\label{thm:algebra}
For any staged tree $\T$, the localised map
\begin{equation}
\varphi_{\mathbf{p}}: \; \left(\R[p_{1},\ldots,p_{n}]/ I_{\T}\right)_{\mathbf{p}}\to \left(\R[\Theta]/\ideal{\theta-1} \right)_{\varphi(\mathbf{p})}
\end{equation}
is an isomorphism of $\R$-algebras. Hence $(\ker \varphi)_{\mathbf{p}}=(I_{\T})_{\mathbf{p}}$.
\end{proposition}

\begin{proof}
To prove that $\varphi_{\mathbf{p}}$ is an isomorphism we define the map 
\begin{equation}
\psi~:\left(\R[\Theta]/\ideal{\theta-1} \right)_{\varphi(\mathbf{p})}\to \left(\R[p_{1},\ldots,p_{n}]/ I_{\T}
\right)_{\mathbf{p}}, \quad
\theta(v,v')\mapsto \dfrac{p_{[v']}}{p_{[v]}}
\end{equation}
and check that this is an inverse for $\varphi_{\mathbf{p}}$. Let thus $\lambda=(e_{1},\ldots,e_{k})$ denote the ordered 
sequence of edges of the root-to-leaf path which is assigned atomic probability $p_{i}$. Then
\begin{equation}
\psi(\varphi_{\mathbf{p}}(p_{i}))= \psi(\prod_{i=1}^{k}\theta(e_{i}))= \prod_{i=1}^{k}\psi(\theta(e_{i})).                                   
\end{equation}
Writing the edges $e_{i}$ in terms of pairs of vertices, so $e_{1}=(v_{0},v_{1}),e_{2}=(v_{1},v_{2}),\ldots, e_{k}=(v_{k-1},v_{k})$, we see that the above can be further simplified to: 
\begin{equation}
 \prod_{i=1}^{k}\psi(\theta(e_{i})) 
 = \prod_{i=1}^{k}\frac{p_{[v_{i}]}}{p_{[v_{i-1}]}}
 = \frac{p_{[v_{1}]}}{p_{[v_{0}]}} \frac{p_{[v_{2}]}}{p_{[v_{1}]}} \cdots \frac{p_{[v_{k}]}}{p_{[v_{k-1}]}}
   = \frac{p_{[v_{k}]}}{p_{[v_{0}]}}=\frac{p_{i}}{1}=p_i
\end{equation}
for any $i=1,\ldots,n$.

For the other direction, we see that
\begin{equation}
\varphi_{\mathbf{p}}(\psi(\theta(v,v')))=\varphi_{\mathbf{p}}\left(\frac{p_{[v']}}{p_{[v]}}\right)=\theta(v,v')
\end{equation}
where the last equality follows from the \cref{lem:condprobs}. This proves the claim.
\end{proof}

The next two corollaries follow immediately from properties of ideals after localising at an element of the ring. 

\begin{corollary}\label{cor:saturation}
The kernel of $\varphi$ is the saturation of $I_{\T}$ with respect to $\mathbf{p}$, so
\begin{equation}
\ker \varphi = I_{\T}:(\mathbf{p})^{\infty}.
\end{equation}
\end{corollary}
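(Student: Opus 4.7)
The plan is to deduce the corollary directly from \cref{thm:algebra} together with the standard commutative-algebra dictionary between localisation and saturation. Recall that for any ideal $J$ in a Noetherian ring $R$ and any element $f \in R$, the contraction of the extended ideal satisfies
\begin{equation}
J R_f \cap R = J : f^{\infty} = \{g \in R \mid f^{n} g \in J \text{ for some } n \geq 0\}.
\end{equation}
So the first step is to contract both sides of the equality $(\ker \varphi)_{\mathbf{p}} = (I_{\T})_{\mathbf{p}}$ provided by \cref{thm:algebra} back to $R = \R[p_{1},\ldots,p_{n}]$. This immediately yields
\begin{equation}
(\ker \varphi) : (\mathbf{p})^{\infty} = I_{\T} : (\mathbf{p})^{\infty}.
\end{equation}

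The remaining task is to show that $\ker \varphi$ is already saturated with respect to $\mathbf{p}$, i.e.\ that $(\ker \varphi) : (\mathbf{p})^{\infty} = \ker \varphi$. The cleanest route is to observe that $\ker \varphi$ is prime and that $\mathbf{p} \notin \ker \varphi$, since saturating a prime ideal by an element not contained in it returns the prime itself. Primality follows because the codomain $\R[\Theta]/\langle \theta - 1 \rangle$ of $\varphi$ is an integral domain: the generators of $\langle \theta - 1 \rangle$ are linearly independent affine-linear forms, so the quotient is isomorphic to a polynomial ring in $\#E - \#V'$ indeterminates. To see $\mathbf{p} \notin \ker \varphi$, note that $\mathbf{p}$ is a product of polynomials of the form $p_{[v]} = \sum_{i \in [v]} p_{i}$, each of which maps under $\varphi$ to $\prod_{e \in E(v_{0} \to v)}\theta(e) \cdot t(v) = \prod_{e \in E(v_{0} \to v)}\theta(e)$ by \cref{eq:tv} and the sum-to-one relations; this is a non-zero monomial in the domain $\R[\Theta]/\langle \theta - 1\rangle$.

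Combining these observations gives the chain
\begin{equation}
\ker \varphi \;=\; (\ker \varphi) : (\mathbf{p})^{\infty} \;=\; I_{\T} : (\mathbf{p})^{\infty},
\end{equation}
which is exactly the claim. The main conceptual obstacle is really packaged inside \cref{thm:algebra}; once that isomorphism is in hand, the corollary is a formal consequence of the localisation–saturation correspondence, so the only things left to verify are the two easy facts that $\ker \varphi$ is prime and that the distinguished element $\mathbf{p}$ does not lie in it.
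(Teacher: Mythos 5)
Your proof is correct and follows essentially the same route as the paper: both deduce the corollary from \cref{thm:algebra} via the standard identity $J R_{\mathbf{p}} \cap R = J : (\mathbf{p})^{\infty}$, the paper simply writing $\ker\varphi = (\ker\varphi)_{\mathbf{p}} \cap R$ where you spell out the justification, namely that $\ker\varphi$ is $\mathbf{p}$-saturated because it is prime and $\mathbf{p} \notin \ker\varphi$. One cosmetic slip: when nontrivial stages are present, $\R[\Theta]$ has fewer than $\#E$ indeterminates (shared labels are identified) and $\langle \theta - 1\rangle$ has fewer than $\#V'$ distinct generators (one per stage), so the quotient is a polynomial ring in $\sum_{i}(k_i - 1)$ variables rather than $\#E - \#V'$; this does not affect your argument, since all that is needed is that the codomain is an integral domain.
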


\begin{proof}
For any ideal $I$ in a polynomial ring $R$ and any non-nilpotent element $x$, we have $I_x \cap R= I:(x)^\infty$ where $I_x$ denotes the extension of the ideal $I$
to the local ring $R_x$. Then from \cref{thm:algebra} it follows that
\begin{equation}
\ker \varphi =(\ker \varphi)_{\bf{p}}\cap R=(I_{\T})_{\bf{p}}\cap R= I_{\T}:(\bf{p})^{\infty}.
\end{equation}
\end{proof}

In \cref{thm:implicit} we saw that the ideal of model invariants $I_\T$ defines the model $\treemodel$ as a variety $\var(I_{\T})$ inside the probability simplex. \Cref{cor:intersection} states that the closed image of the parametrisation $\psi_\T$ is an irreducible component of $I_\T$.

\begin{corollary}\label{cor:intersection}
The ideal $I_{\T}$ has a primary decomposition 
\[I_{\T}=\ker \varphi \cap Q\]
where $Q$ is an intersection of primary components containing $\mathbf{p}$. In particular $\ker \varphi$ is a minimal 
associated prime of $I_{\T}$.
\end{corollary}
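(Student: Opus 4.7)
The plan is to derive this decomposition directly from Corollary~\ref{cor:saturation}, which states $\ker\varphi = I_{\T}:(\mathbf{p})^\infty$, by applying the standard behaviour of saturation under primary decomposition.

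Since $\R[p_1,\ldots,p_n]$ is Noetherian, $I_{\T}$ admits a minimal primary decomposition $I_{\T}=\bigcap_{i=1}^{s}Q_{i}$ with associated primes $P_i=\sqrt{Q_i}$. Partitioning the index set into $A=\{i:\mathbf{p}\notin P_i\}$ and $B=\{i:\mathbf{p}\in P_i\}$, a standard result on saturation (e.g.\ Atiyah--Macdonald, Proposition~4.9) yields
\[
I_{\T}:(\mathbf{p})^{\infty}=\bigcap_{i\in A}Q_i.
\]
Combining this with Corollary~\ref{cor:saturation} gives $\ker\varphi=\bigcap_{i\in A}Q_i$. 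Setting $Q=\bigcap_{i\in B}Q_i$ (with the empty intersection interpreted as the whole ring when $B=\emptyset$), the original decomposition splits as
\[
\ker\varphi\cap Q=\Bigl(\bigcap_{i\in A}Q_i\Bigr)\cap\Bigl(\bigcap_{i\in B}Q_i\Bigr)=\bigcap_{i=1}^{s}Q_i=I_{\T}.
\]
Each component $Q_i$ with $i\in B$ has associated prime containing $\mathbf{p}$, and since $Q_i$ is $P_i$-primary it therefore contains a power of $\mathbf{p}$; this is the sense in which the primary components comprising $Q$ \enquote{contain $\mathbf{p}$}.

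I do not foresee a genuine obstacle: the statement is essentially a bookkeeping reformulation of Corollary~\ref{cor:saturation} in the language of primary decomposition. The one point worth a brief remark is that $\mathbf{p}$ is a non-zero-divisor modulo $\ker\varphi$---this holds because $\ker\varphi$ is prime (as the kernel of a map into the domain $\R[\Theta]/\ideal{\theta-1}$) and $\varphi(\mathbf{p})=\prod_v\varphi(p_{[v]})$ is a nonzero product of monomials in the edge parameters, so $\mathbf{p}\notin\ker\varphi$. Consequently $A$ is nonempty and $\ker\varphi$ really does appear as a nontrivial collection of primary components of~$I_{\T}$, making the decomposition meaningful.
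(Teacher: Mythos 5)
Your proposal is correct and follows essentially the same route as the paper: the paper's proof simply localizes the primary decomposition of $I_{\T}$ at $\mathbf{p}$ and invokes \cref{thm:algebra}, which is exactly the content of your saturation argument via \cref{cor:saturation} (since $I_{\T}:(\mathbf{p})^{\infty}$ is the intersection of the primary components whose associated primes do not contain $\mathbf{p}$). Your additional remarks---that $\mathbf{p}\notin\ker\varphi$ and that the discarded components contain a power of $\mathbf{p}$---are correct clarifications of the same bookkeeping, not a different method.
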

\begin{proof}
This follows by localizing the primary decomposition for $I_{\T}$.
\end{proof}

We remark that a result similar to \cref{cor:saturation} has been proved by \citet{Garcia.etal.2005}[Theorem 8] for 
Bayesian networks. Studying Bayesian Networks through the lense of staged trees gives a simpler proof of Theorem 8
in \citet{Garcia.etal.2005}. This new perspective  makes the relation between sum-to-one conditions in the parameter space 
and implicit description of the model more transparent and it provides an alternative combinatorial framework to study defining equations of these models. In addition, because Bayesian networks are simple special cases of staged 
trees,  \cref{thm:algebra} holds for a much more general class of models.

%
%
%

\section{Toric staged tree models}\label{sect:toric}
This section forms the core of our algebraic analysis of staged tree models. In particular, we are now ready to provide conditions under which the kernel of the ring map $\varphi$ is a toric ideal. 
In order to study these conditions, we introduce an ideal $\ipaths$ whose generators can be read from a staged tree in a way slightly different to the odds ratios. This ideal captures in a finer way the implicit equations that define $\treemodel$ and is a key ingredient to understand the case when $\ker \varphi=\ker \phitoric$. We will find in \cref{thm:toric} that these kernels are equal if and only if the local sum-to-1 conditions imposed on the domain of a probability tree parametrisation can be ignored.
\subsection{Definition and properties of $\ipaths$}
Let $v,w\in V$ and suppose the two vertices are in the same stage, so their attached labels are identified $\ttheta_v=\ttheta_w$. For simplicity of notation, let $\ttheta_{v}=(s_1,\ldots,s_k)$ be this vector of labels. Write the sets $E(v)$ and $E(w)$ of edges emanating from $v$ and $w$ as $E(v)=\{(v,v_1),\ldots,(v,v_k)\}$ and $E(w)=\{(w,w_1),\ldots,(w,w_k)\}$, respectively. Without loss of generality assume that $s_i=\theta(v,v_i)=\theta(w,w_i)$.
We define the ideal in $\R[p]$ associated to the identification $v\sim w$ as 
\begin{equation}\label{eq:stageideal}
I_{v\sim w}=\langle p_{[v_i]}p_{[w_j]}-p_{[w_i]}p_{[v_j]}~|~ i,j=1,\ldots,k\rangle
\end{equation}
and denote the sum of all of these ideals as
\begin{equation}\label{eq:ipaths}
\ipaths=\sum_{v\sim w \mbox{ {\tiny in} } \T}I_{v \sim w}.
\end{equation}
%
Recall that $p_{[v]}$ is shorthand notation for a sum of atomic probabilities, $p_{[v]}=\sum_{i\in [v]}p_i$. The generators of $\ipaths$ are quadratic polynomials that vanish on the closed image of $\psi_{\T}$. They are both algebraically and graphically closely related to the generators of $I_{\T}$ and provide an excellent tool to study the kernel of the corresponding parametrisation $\varphi$: we investigate the details of this connection in this section.

The reason we denote the ideal in \cref{eq:ipaths} as $\ipaths$ is because each generator ${p_{[v_i]}p_{[w_j]}-p_{[w_i]}p_{[v_j]}}$
can be read off the staged tree by following two paths starting and ending at the identified vertices: this is shown in \cref{pathExplain}. We thus call these generators \emph{path differences}. For simplicity, we henceforth denote the two paths coding such a path difference as the pair ${(v_{i}\to w_{j}, w_{i}\to v_{j})}$.
\medskip

Using the identity $p_{[v]}=\sum_{(v,v_i)\in E(v)}p_{[v_i]}$ derived in \cref{eq:children}, we immediately see that if $\T$ is a binary staged tree  then any odds-ratio equation $p_{[w]}p_{[v_i]}-p_{[w_i]}p_{[v]}=0$ from \cref{lem:oddsratios} which identifies two edges $(v,v_i)$ and $(w,w_i)$ is a path difference, $i=1,2$. These reduce to the unique generator of $I_{v\sim w}$. Indeed, we can explicitly calculate that $p_{[v]}=p_{[v_1]}+p_{[v_2]}$ and $p_{[w]}=p_{[w_1]}+p_{[w_2]}$ so that
\begin{equation}
\begin{split}
p_{[v]}p_{[w_1]}-p_{[v_1]}p_{[w]} &= (p_{[v_1]}+p_{[v_2]})p_{[w_1]} - p_{[v_1]}(p_{[w_1]}+p_{[w_2]})\\
&= p_{[v_2]}p_{[w_1]}-p_{[v_1]}p_{[w_2]}.
\end{split}
\end{equation}
This equality between odds-ratio differences and path differences does not hold in case $\T$ is not a binary tree: see the tree $\T_{2}$ in \cref{fig:oddsratios} for an illustration. More generally, for non-binary trees each odds-ratio difference can be written as a sum of elements in the ideal $\ipaths$. We can see this by writing the odds-ratio difference which identifies the labels $\theta(v,v_i)=\theta(w,w_i)$ as
\begin{equation}
\begin{split}
p_{[v_i]}p_{[w]}-p_{[w_i]}p_{[v]}&= p_{[v_i]}\left( \sum_{(w,w_j)\in E(w)}p_{[w_j]}\right)-p_{[w_i]}\left( \sum_{(v,v_j)\in E(v)}p_{[v_j]} \right)\\
                                 &= \sum_{j=0}^{k}p_{[v_i]}p_{[w_j]}-p_{[w_i]}p_{[v_j]}.
\end{split}
\end{equation}
The differences $p_{[v_i]}p_{[w_j]}-p_{[w_i]}p_{[v_j]}$ for $v\sim w$ and $i,j\in \{1,\ldots,k\}$ are exactly the generators of $I_{v\sim w}$. 
This discussion proves the first containment
in the next lemma.

\begin{lemma}\label{lem:subset}
$I_{\T}\subseteq \ipaths\subseteq \ker \varphi$.
\end{lemma}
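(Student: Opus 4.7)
The first containment $I_{\T}\subseteq \ipaths$ is essentially contained in the calculation that immediately precedes the lemma statement: if $v\sim w$ and $(v,v_i),(w,w_i)$ share a label, then the odds-ratio generator of $I_{\T}$ can be rewritten, via \cref{eq:children}, as
\begin{equation}
p_{[v_i]}p_{[w]}-p_{[w_i]}p_{[v]}
=\sum_{j=1}^{k}\bigl(p_{[v_i]}p_{[w_j]}-p_{[w_i]}p_{[v_j]}\bigr),
\end{equation}
which is a sum of generators of $I_{v\sim w}$ and hence lies in $\ipaths$. The plan is to simply state this expansion and record that the generators of $I_{\T}$ therefore all belong to $\ipaths$.

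For the second containment $\ipaths\subseteq \ker\varphi$, I will check that each generator $p_{[v_i]}p_{[w_j]}-p_{[w_i]}p_{[v_j]}$ of $I_{v\sim w}$ vanishes under $\varphi$. The key tool is \cref{eq:tv} together with the definition of $\varphi$ as a map into $\R[\Theta]/\ideal{\theta-1}$. Since $t(u)=1$ modulo $\ideal{\theta-1}$ for every vertex $u$, we get
\begin{equation}
\varphi(p_{[u]})=\prodd{e\in E(v_0\to u)}\theta(e)
\end{equation}
for every $u\in V$. Applying this to the four summands and noting that $E(v_0\to v_i)=E(v_0\to v)\cup\{(v,v_i)\}$ (analogously for the other three), we obtain
\begin{equation}
\varphi\bigl(p_{[v_i]}p_{[w_j]}\bigr)=\prodd{e\in E(v_0\to v)}\theta(e)\cdot\prodd{e\in E(v_0\to w)}\theta(e)\cdot\theta(v,v_i)\theta(w,w_j),
\end{equation}
and the analogous expression for $\varphi(p_{[w_i]}p_{[v_j]})$ with the last factor replaced by $\theta(w,w_i)\theta(v,v_j)$. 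The assumption $v\sim w$ gives $\theta(v,v_i)=\theta(w,w_i)$ and $\theta(v,v_j)=\theta(w,w_j)$, so the two images coincide and the difference lies in $\ker\varphi$.

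Neither step is really an obstacle: the first is a bookkeeping expansion already exhibited before the lemma, and the second is a direct computation that hinges only on the fact that the $t(u)$-factors trivialise modulo the sum-to-one relations. The only small point worth stating carefully is that we are working modulo $\ideal{\theta-1}$ throughout, since this is what makes $\varphi(p_{[u]})$ reduce to the single product along the root-to-$u$ path rather than the full subtree polynomial.
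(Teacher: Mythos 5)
Your proposal is correct and follows essentially the same route as the paper: the first containment via the expansion of each odds-ratio generator into a sum of generators of $I_{v\sim w}$ (the discussion preceding the lemma), and the second by applying $\varphi$ to each path difference, using that $t(u)=1$ modulo $\ideal{\theta-1}$ so $\varphi(p_{[u]})$ is the product of labels along the root-to-$u$ path, and that the stage identification equates the extra factors $\theta(v,v_i)=\theta(w,w_i)$ and $\theta(v,v_j)=\theta(w,w_j)$. No gaps.
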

\begin{proof} 

From the definition of $\ipaths$, it is enough to show that for any two vertices $v,w$ in the same stage, the generators of $I_{v\sim w}$ are in $\ker \varphi$.
Using the relation obtained in \cref{eq:tv} from \cref{sub:odds}, we have 
\begin{equation}
\varphi(p_{[v]})=t(v) \cdot\prodd{e\in E(v_0\to v)}\theta(e)= \prodd{e\in E(v_0\to v)}\theta(e).
\end{equation}
This implies that
\begin{equation}\label{eq:patheq}
\varphi(p_{[v_i]}p_{[w_j]}-p_{[w_i]}p_{[v_j]}) = \left(s_i \cdot \prodd{e\in E(v_0\to v)}\theta(e) \right)\left(s_j \cdot\prodd{e\in E(v_0\to w)}\theta(e) \right)- \left(s_j \cdot \prodd{e\in E(v_0\to w)}\theta(e) \right)\left(s_i \cdot\prodd{e\in E(v_0\to v)}\theta(e) \right)=0.
\end{equation}
\end{proof}

\begin{figure}[t]
\begin{center}
\begin{tikzpicture}
\renewcommand{\xx}{1.3}
\renewcommand{\yy}{0.3}
                        
\node (v0) at (-0.5*\xx,5.5*\yy) {\stage{White}{0}};
\node[shape=circle,draw,inner sep=2pt,outer sep=3pt,fill=Green!70] (v1) at (1*\xx,11.5*\yy) {$v$};
\coordinate (x1) at ($(v0)!0.33!(v1)$) {};
\coordinate (x2) at ($(v0)!0.66!(v1)$) {};
\node[shape=circle,draw,inner sep=1.5pt,outer sep=3pt,fill=Green!70] (v2) at (1*\xx,3.5*\yy) {$w$};

\node[shape=circle,draw,inner sep=1pt,outer sep=3pt,fill=White] (v3) at (2*\xx,13.5*\yy) {$v_{i}$};
\node[shape=circle,draw,inner sep=0.5pt,outer sep=3pt,fill=White] (v4) at (2*\xx,9.5*\yy) {$v_{j}$};
\node[shape=circle,draw,inner sep=0.5pt,outer sep=3pt,fill=White] (v5) at (2*\xx,5.5*\yy) {$w_{i}$};
\node[shape=circle,draw,inner sep=0pt,outer sep=3pt,fill=White] (v6) at (2*\xx,1.5*\yy) {$w_{j}$};

\node (v7) at (3*\xx,14.5*\yy) {};
\node (l1) at (3*\xx,13.5*\yy) {};
\node (v8) at (3*\xx,12.5*\yy) {};
\node (v9) at (3*\xx,10.5*\yy) {};
\node (v10) at (3*\xx,8.5*\yy) {};
\node (v11) at (3*\xx,6.5*\yy) {};
\node (v12) at (3*\xx,4.5*\yy) {};

\node at (2*\xx,11.5*\yy) {$\vdots$};
\node at (2*\xx,3.5*\yy) {$\vdots$};
\node at (2.75*\xx,2.75*\yy) {$\vdots$};

\begin{scope}[shift={(0.5*\xx,-3*\yy)}]
\path (3*\xx,9.5*\yy) edge[line width=5pt,color=SkyBlue] (4.1*\xx,9.5*\yy);
\path (4.25*\xx,9.5*\yy) edge[line width=5pt,color=Yellow] (5.35*\xx,9.5*\yy);
\node[right] at (3*\xx,10*\yy) {$p_{[v_i]}p_{[w_j]}=p_{[w_i]}p_{[v_j]}$};
\node at (4.25*\xx,13*\yy) {tail of path};
\node at (4.25*\xx,7*\yy) {head of path};
\path[->] (3.8*\xx,11*\yy) edge[bend left] (3.9*\xx,12*\yy);
\path[->] (5.1*\xx,11*\yy) edge[bend right] (5*\xx,12*\yy);
\path[->] (3.3*\xx,9*\yy) edge[bend right] (3.4*\xx,8*\yy);
\path[->] (4.6*\xx,9*\yy) edge[bend left] (4.5*\xx,8*\yy);
\end{scope}

\path (v0) edge[line width=4pt,color=SkyBlue,transform canvas={yshift=3pt}] (v1);
\path (v0) edge[line width=4pt,color=Yellow,transform canvas={yshift=-3pt}] (v1);
\path (v0) edge[line width=4pt,color=SkyBlue,transform canvas={yshift=-2.5pt}] (v2);
\path (v0) edge[line width=4pt,color=Yellow,transform canvas={yshift=2.5pt}] (v2);

\path (v1) edge[line width=5pt,color=SkyBlue] (v3);
\path (v1) edge[line width=5pt,color=Yellow] (v4);
\path (v2) edge[line width=5pt,color=Yellow] (v5);
\path (v2) edge[line width=5pt,color=SkyBlue] (v6);

\draw [-]         (v0) -- (x1);
\path (x1) edge[style={decorate, decoration=snake}] (x2);
\draw [->]         (x2) -- (v1);
\path[-angle 90,font=\scriptsize] (v0) edge[style={decorate, decoration=snake}] (v2);
\draw [->]         (v1) -- node [above=2pt] {$s_i$} (v3);
\draw [->]         (v1) -- node [below=2pt] {$s_j$} (v4);
\draw [->]         (v2) -- node [above=2pt] {$s_i$} (v5);
\draw [->]         (v2) -- node [below=2pt] {$s_j$} (v6);

\draw [->]         (v3) -- (v7);
\draw [->]         (v3) -- (l1);
\draw [->]         (v3) -- (v8);
\draw [->]         (v4) -- (v9);
\draw [->]         (v4) -- (v10);
\draw [->]         (v5) -- (v11);
\draw [->]         (v5) -- (v12);
\end{tikzpicture}
\end{center}
\caption{A staged tree illustrating path differences. Here, white stages can be of any colour as long as $v$ and $w$ are in the same stage. The unique path starting at $v_{i}$ and ending in $w_{j}$ then corresponds to the product $p_{[v_{i}]}p_{[w_{j}]}$ and the unique path starting at $w_{i}$ and ending at $v_{j}$ corresponds to $p_{[w_{i}]}p_{[v_{j}]}$. \label{pathExplain}}
\end{figure}

Over the next section, we will use \cref{lem:subset}, to study conditions under which the kernel $\ker \varphi$ is toric.

\subsection{Characterization of toric staged tree models} \label{sub:thmstar}
\Cref{thm:toric} presented in this section is one of the key results in this paper. It gives necessary algebraic criteria for a staged tree model to be toric. Most pleasantly, we find that such criteria can be formulated in terms of the polynomials $t(v)$ defined in \cref{sect:implicit} for $v\in V$, and can therefore be interpreted in terms of the statistical properties of the staged tree model.

To formulate this connection precisely, we briefly recall a definition from \citet{Smith.Anderson.2008}. Two vertices $v$ and $w$ in a staged tree $\T=(V,E)$ are said to be in the same \emph{position} if they are in the same stage $v\sim w$ and their induced subtrees $\T(v)$ and $\T(w)$ have the same parametrisation $\psi_{\T(v)}=\psi_{\T(w)}$. The notion of positions is of practical importance in staged tree models because it both provides a vocabulary to address vertices which have identical future unfoldings (independent of their different histories) and it provides a tool to classify subtrees representing the same statistical (sub-)model. \citet{Goergen.Smith.2017} show that two vertices $v$ and $w$ are in the same position if and only if the symbolic polynomials $t(v)$ and $t(w)$ as defined in~\cref{eq:tv} are identical: this notion is also known as \emph{polynomial equivalence}.

Consider now a path difference $p_{[v_{i}]}p_{[w_{j}]}-p_{[w_{i}]}p_{[v_{j}]}\in I_{v\sim w}$. We showed in \cref{lem:subset} that $\varphi(p_{[v_{i}]}p_{[w_{j}]}-p_{[w_{i}]}p_{[v_{j}]})=0$. In the proof of this lemma, the equation in \cref{eq:patheq} can equivalently be written as
\begin{equation} \label{eq:tpoly}
\varphi(p_{[v_i]}p_{[w_j]}-p_{[v_j]}p_{[w_i]}) 
= s_{i}s_{j}
\cdot\left(~~~\prodd{~e\in E(v_0\to v)}\theta(e)\cdot \prodd{e\in E(v_0\to w)}\theta(e)~~\right)
\cdot(t(v_{i})t(w_{j})-t(w_{i})t(v_{j})).
\end{equation}
The right hand side of \cref{eq:tpoly} can be seen as an element in $\R[\Theta]/\ideal{\theta-1}$ or an element in $\R[\Theta]$.
In the former case, due to the sum-to-1 conditions on every vertex, the polynomials associated to the respective subtrees are also equal to one, $t(v_{i})=t(w_{j})=t(w_{i})=t(v_{j})=1$, and thus 
$\varphi(p_{[v_i]}p_{[w_j]}-p_{[v_j]}p_{[w_i]}) =0$. In the latter case, the path difference does not lie in the kernel, $\varphi(p_{[v_i]}p_{[w_j]}-p_{[v_j]}p_{[w_i]})\neq 0$, unless we have $t(v_{i})t(w_{j})=t(w_{i})t(v_{j})$. This observation implies that if $t(v_{i})t(w_{j})=t(w_{i})t(v_{j})$ is
true in $\R[\Theta]$ then the path difference $p_{[v_i]}p_{[w_j]}-p_{[v_j]}p_{[w_i]}\in \ker \phitoric$ lies in the kernel of the monomial parametrisation associated to the tree. Therefore, if we assume that in the ring $\R[\Theta]$ spanned by the edge labels the equation
\begin{equation}\label{eq:star}
t(v_{i})t(w_{j})=t(w_{i})t(v_{j}) \qquad\text{ for all } i,j=1,\ldots,k 
\tag{$\star$}
\end{equation}  
is true then the ideal $I_{v\sim w}\subset \ker \phitoric $ lies in the kernel of the toric map. This leads us to the main result of this paper.

\begin{theorem}\label{thm:toric}
Let $\T$ be a staged tree. Then
\begin{equation}
\ker \phitoric =\ker \varphi
\end{equation}
if and only if condition \cref{eq:star} holds for all vertices $v,w$  of $\T$ which are in the same stage.
\end{theorem}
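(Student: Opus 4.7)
The plan is to exploit the factorisation $\varphi=\pi\circ\phitoric$, where $\pi:\R[\Theta]\to \R[\Theta]/\langle\theta-1\rangle$ is the quotient map. This already gives the containment $\ker\phitoric\subseteq\ker\varphi$ for free, so the only nontrivial content of the theorem is the reverse containment. The whole argument will rest on the identity recorded in \cref{eq:tpoly}, namely
\begin{equation}
\phitoric\bigl(p_{[v_i]}p_{[w_j]}-p_{[w_i]}p_{[v_j]}\bigr)= s_i s_j\cdot P_v\cdot P_w\cdot\bigl(t(v_i)t(w_j)-t(w_i)t(v_j)\bigr),
\end{equation}
where $P_v=\prod_{e\in E(v_0\to v)}\theta(e)$ and analogously for $P_w$.

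For the ``only if'' direction I would argue as follows. By \cref{lem:subset}, every path difference $p_{[v_i]}p_{[w_j]}-p_{[w_i]}p_{[v_j]}$ lies in $\ipaths\subseteq\ker\varphi$. Assuming $\ker\varphi=\ker\phitoric$, each such path difference must therefore map to zero in $\R[\Theta]$. Since $\R[\Theta]$ is an integral domain and the monomial $s_i s_j P_v P_w$ is nonzero, the displayed formula forces $t(v_i)t(w_j)=t(w_i)t(v_j)$ for every choice of $v\sim w$ and indices $i,j$, which is exactly condition~(\ref{eq:star}).

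For the ``if'' direction, assume (\ref{eq:star}) and run the identity in reverse: every generator of $I_{v\sim w}$ is now killed by $\phitoric$, so $I_{\T}\subseteq\ipaths\subseteq\ker\phitoric$. The difficulty is to upgrade this containment of generators to the containment $\ker\varphi\subseteq\ker\phitoric$ of whole ideals. Here I would invoke \cref{cor:saturation} to write $\ker\varphi=I_{\T}:(\mathbf{p})^{\infty}$ and use monotonicity of saturation to conclude $\ker\varphi\subseteq\ker\phitoric:(\mathbf{p})^{\infty}$. The key observation is that $\ker\phitoric$ is already saturated with respect to $\mathbf{p}$: it is prime, being the kernel of a map into the integral domain $\R[\Theta]$, and $\phitoric(\mathbf{p})$ is a nonzero polynomial in the edge labels, so $\mathbf{p}$ is not a zero divisor modulo $\ker\phitoric$. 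Hence $\ker\phitoric:(\mathbf{p})^{\infty}=\ker\phitoric$ and the desired containment follows.

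I expect the main obstacle to be precisely this last step, where the gap between ``finitely many explicit generators vanish under $\phitoric$'' and ``all of $\ker\varphi$ vanishes under $\phitoric$'' could a priori be large. The saturation reformulation provided by \cref{thm:algebra} and \cref{cor:saturation} is exactly the tool that closes the gap, since it reduces $\ker\varphi$ to a colon ideal of the much more tractable $I_{\T}$ (and hence of $\ipaths$), after which the primality of $\ker\phitoric$ together with $\phitoric(\mathbf{p})\neq 0$ finishes the argument without any further combinatorial work on the tree.
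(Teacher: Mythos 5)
Your proposal is correct and takes essentially the same route as the paper: condition~\cref{eq:star} together with \cref{lem:subset} gives $I_{\T}\subseteq\ipaths\subseteq\ker\phitoric\subseteq\ker\varphi$, and your saturation step via \cref{cor:saturation} is just the colon-ideal phrasing of the paper's localization of \cref{thm:algebra} at $\mathbf{p}$, finished in both cases by primality of $\ker\phitoric$ and the fact that $\phitoric(\mathbf{p})\neq 0$. The only (harmless) difference is that you spell out the \enquote{only if} direction explicitly via the integral-domain argument applied to \cref{eq:tpoly}, which the paper leaves to the discussion preceding the theorem.
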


\begin{proof}
If condition \cref{eq:star} holds for all vertices $v,w$ in the same stage, it follows that
$\ipaths \subset \ker \phitoric $ by the definition of $\ipaths$ and the discussion before the theorem. Hence, using \cref{lem:subset}, we deduce that
$I_{\T}\subset \ker \phitoric$.
It is also straightforward to see that $\ker \phitoric \subset \ker \varphi$. We thus arrive at the chain of containments
\begin{equation}
I_{\T}\subset \ker \phitoric \subset \ker \varphi.
\end{equation}

Using \cref{thm:algebra} and localizing at $\mathbf{p}$, we see that 
\begin{equation}
(I_{\T})_{\mathbf{p}}= (\ker \phitoric )_{\mathbf{p}}= (\ker \varphi)_{\mathbf{p}}.
\end{equation}
Since both $\ker \phitoric$ and $\ker \varphi$ are prime, we obtain $\ker \phitoric =\ker \varphi$.
\end{proof}

\Cref{thm:toric} has two main implications for our algebraic characterisation of staged tree models. 

First, whenever condition \cref{eq:star} holds, the algebraic parametrisation \cref{eq:phi} behaves exactly like the monomial parametrisation \cref{eq:phitoric}. As a consequence, \cref{eq:star} \emph{is true if and only if the sum-to-1 conditions on the parameter space of the staged tree can be ignored}.

Second, we can simply read from the tree graph a sufficient condition for \cref{eq:star} to hold. In fact, if two vertices $v\sim w$ with children $v_1,v_2$ and $w_1,w_2$, respectively, are in the same stage then $t(v_{1})t(w_{2})=t(w_{1})t(v_{2})$ is satisfied  whenever $t(v_{1})=t(w_{1})$ and $t(w_{2})=t(v_{2})$. In this case, the vertices $v_1$ and $w_1$, and $v_2$ and $w_2$, are in the same position, respectively. This implies the following:

\begin{corollary}\label{cor:simple}
If in a staged tree all vertices which are in the same stage are also in the same position then the corresponding staged tree model is toric.
\end{corollary}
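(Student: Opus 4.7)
The plan is to reduce the claim to \cref{thm:toric}: that theorem asserts $\ker\varphi=\ker\phitoric$ precisely when condition \cref{eq:star} holds at every pair of vertices in the same stage. It therefore suffices to verify \cref{eq:star} under the stronger hypothesis that any two stage-equivalent vertices are in fact in the same position.

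Fix $v\sim w$ with emanating edges labelled $\theta(v,v_i)=\theta(w,w_i)=s_i$ for $i=1,\ldots,k$. By the hypothesis of the corollary, $v$ and $w$ are in the same position, so by definition $\psi_{\T(v)}=\psi_{\T(w)}$. Since the two subtrees have identical root-edge labels $s_1,\ldots,s_k$ and identical total parametrisations, restricting to those root-to-leaf paths that pass through the $i$-th root edge yields $s_i\,\psi_{\T(v_i)}=s_i\,\psi_{\T(w_i)}$, and hence $\psi_{\T(v_i)}=\psi_{\T(w_i)}$. Summing the components gives $t(v_i)=t(w_i)$ in $\R[\Theta]$ for every $i$. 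Condition \cref{eq:star} now reduces to the tautology
\begin{equation}
t(v_i)\,t(w_j)=t(v_i)\,t(v_j)=t(w_i)\,t(v_j),\qquad i,j=1,\ldots,k,
\end{equation}
and \cref{thm:toric} then yields $\ker\varphi=\ker\phitoric$, which, by the definition recalled at the end of \cref{sub:modelideal}, is exactly what it means for $\treemodel$ to be toric.

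The only non-routine step is the propagation of the position property from a parent pair to matched children, i.e.\ the deduction that $t(v_i)=t(w_i)$. I expect this to cause no real obstacle: it follows either by the short restriction-of-parametrisations argument sketched above, or by appealing to the polynomial equivalence characterisation $t(v)=t(w)$ recalled before \cref{thm:toric} and comparing coefficients of $s_i$ in the recursion $t(v)=\sum_{i}s_i\,t(v_i)$, treating the top edge labels $s_i$ as independent indeterminates of the parameters appearing inside each $t(v_i)$ and $t(w_i)$. With $t(v_i)=t(w_i)$ in hand, the verification of \cref{eq:star} is entirely symbolic and the corollary is immediate.
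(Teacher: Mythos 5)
Your proposal is correct and takes essentially the same route as the paper: the paper also proves the corollary by noting that when stage-equivalent vertices are in the same position their matched children satisfy $t(v_i)=t(w_i)$, so condition \cref{eq:star} holds trivially and \cref{thm:toric} gives $\ker\varphi=\ker\phitoric$. The only difference is that you make explicit the propagation of the position property to matched children, which the paper treats as immediate from the definition of a position (identical subtree parametrisations); just phrase your fallback coefficient-comparison argument with care, since the labels $s_i$ of a stage may reappear deeper inside the subtrees (as in $\T_3$ of \cref{fig:radicaltree}).
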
 

In the language of \citet{Collazo.etal.2018}, \emph{simple} chain event graphs are toric.

Intuitively, coloured probability trees for which all stages are also positions have many symmetries. For example, the tree $\T_{1}$ from \cref{fig:radicaltree} satisfies condition~\cref{eq:star} for all staged vertices whereas the tree $\T_{3}$ in the same figure does not. Indeed, the kernel of the monomial parametrisation belonging to $\T_3$ is empty.
To decide wether $\T_1$ is toric we can simply check that the colored subtree rooted at $v_1$ is the same as the colored
subtree rooted at $v_2$. This example illustrates how staged trees provide a new combinatorial framework in which to study
defining equations of Bayesian networks. More precisely, although condition \cref{eq:star} is algebraic in nature it can also be checked combinatorially in terms of the colourings of the vertices of the staged tree.
\medskip

We illustrate in the next example that the result in \cref{cor:simple} is sufficient but not necessary: even if none of the vertices $v_i,v_j$ from condition~\cref{eq:star} are in the same position it could still be the case that $t(v_{i})t(w_{j})=t(w_{i})t(v_{j})$. 

\begin{example}

Consider a staged tree $\T$ where the vertices $v,w$ are the only children of the root $v_0$  and $v\sim w$. 
 Suppose $v,w$ only have two outgoing edges and denote $E(v)=\{(v,v_1),(v,v_2)\}$ and $E(w)=\{(w,w_1),(w,w_2)\}$. If 
$t(v_1)=(a_0+a_1)(b_0+b_1+b_2), t(w_2)=c_0+c_1$ and $t(w_1)=(a_0+a_1)(c_0+c_1), t(v_2)=b_0+b_1+b_2$ then the equation $t(v_{1})t(w_{2})=t(w_{1})t(v_{2})$ is satisfied but none of the vertices $v_1,v_2,w_1,w_2$ are in the same 
position.

\end{example}

\subsection{Extension of paths and binomial generators of $\ker \phitoric$}
We proved in \cref{lem:subset} that  $\ipaths$ is contained in $\ker \varphi$. In addition,  each generator of $\ipaths$ can be read from $\T$ as in \cref{pathExplain}. In this section we describe a way to extend pairs of paths associated to  generators of $\ipaths$ in such
a way that when we write down the path difference of an extended pair, we automatically get an element in $\ker \varphi$.


If we extend each path $(v_1\to w_2,w_1\to v_2)$ by one edge in such a way that the two added edges have the same parameter label, we
see that from the extended paths we can write a new path difference in $\R[p_1,\ldots,p_n]$ that is also in $\ker \varphi$. More precisely,
a pair of paths $(\head{v}\to \tail{w},\head{w}\to \tail{v})$ is said to be an \emph{extension} of $(v_1\to w_2,w_1\to v_2)$ by $l$ edges 
if the next two conditions hold. 


\begin{enumerate}
\item The path $v_h\to w_t$ is obtained from $v_1\to w_2$ by adding $l$ edges either at the head or tail of $v_1\to w_2$ and likewise for $w_h\to v_t$ and $w_1\to v_2$.
     
\item Let $\{e_1,\ldots, e_l\} = E(v_h\to w_t)\setminus E(v_1\to w_2)$ and $\{e'_1,\ldots, e'_l\}=E(w_h\to v_t)\setminus E(w_1\to v_2)$ then
\[\prod_{i=1}^{l} \theta(e_{i})=\prod_{i=1}^{l} \theta(e'_{i}).\]
\end{enumerate}

We associate the path difference $p_{[v_{h}]}p_{[w_{t}]}-p_{[w_{h}]}p_{[v_{t}]}$ to the extended pair $(v_h\to w_t,w_h\to v_t)$ and note that the second condition implies that
 $p_{[v_{h}]}p_{[w_{t}]}-p_{[w_{h}]}p_{[v_{t}]}\in \ker \varphi$.
 
 \begin{example}\label{ex:extends}
 We consider the staged tree $\T_1$ in \cref{fig:radicaltree}. We label the leaves  of $\T_{2}$\  from top to bottom by $l_1,\ldots,l_8$ and in the same fashion for the atomic probabilities $p_{1},\ldots,p_{8}$. Consider the
 pair $(v_3\to v_6,v_5\to v_4)$ and its associated path difference
 \[p_{[v_{3}]}p_{[v_{6}]}-p_{[v_{5}]}p_{[v_{4}]}=(p_{1}+p_{2})(p_{7}+p_{8})-(p_{5}+p_{6})(p_{3}+p_{4}).\]
 Notice that a possible extension of $(v_3\to v_6,v_5\to v_4)$ by one edge is given by $(l_{1}\to v_{6},l_{5}\to v_{4})$ because $\theta(v_{3},l_{1})=\theta(v_{5},l_{5})=
 \sigma_{0}$. The path difference associated to this extension is
 \[p_{[l_{1}]}p_{[v_{6}]}-p_{[l_{5}]}p_{[v_{4}]}=p_{1}(p_{7}+p_{8})-p_{5}(p_{3}+p_{4}).\] We can further extend this path by using an edge with label $\eta_{0}$ or
 $\eta_{1}$. For instance the extension $(l_{1}\to l_{7},l_{5}\to l_{3})$ with associated path difference $p_{1}p_{7}-p_{5}p_{3}$.
 
 \end{example}
  From the example above, we see that each path difference in $I_{v\sim w}$ can have several extensions to paths in $\T$. We call an extension $(v_h\to w_t,w_h\to v_t)$
  of $(v_1\to w_2,w_1\to v_2)$ a \emph{maximal extension} if it is not possible to add edges to the pair $(v_h\to w_t,w_h\to v_t)$ in such a way that condition (2) is
  satisfied. For example, the extension $(l_{1}\to l_{7},l_{5}\to l_{3})$ in \cref{ex:extends} is maximal but $(l_{1}\to v_{6},l_{5}\to v_{4})$ is not.

  We define the ideal  $\impaths$ generated by all maximal path differences  in an analogous way to $\ipaths$. Explicitly, given $v\sim w$,  we  denote
  by $I_{\mathrm{max}(v\sim w)}$ the ideal generated by
  all path differences associated to all maximal paths extending a pair $(v_{i}\to w_{j},w_{i}\to v_{j})$ for all $i,j \in \{1,\ldots, k\}$. Then 
  \begin{equation}
  \impaths= \sum_{v\sim w \mbox{ {\tiny in} } \T} I_{\mathrm{max}(v\sim w)}.
  \end{equation}

   We say that a pair 
  of paths $(v_1\to w_2,w_1\to v_2)$ \emph{fully extends} if for each maximal extension $(v_h\to w_t,w_h\to v_t)$, all of the vertices $v_h,w_t,w_h,v_t$ are leaves of $\T$. When this is the case, we see that all path differences associated to extensions are actually binomials. 
  For instance, it is easy to see that all paths in $\T_{1}$ from \cref{ex:extends} fully extend and 
    \begin{equation}
  \impaths=\langle p_{1}p_{7}-p_{5}p_{3}, p_{1}p_{8}-p_{5}p_{4},p_{2}p_{7}-p_{6}p_{3},p_{2}p_{8}-p_{6}p_{4},p_{1}p_{6}-p_{5}p_{2},p_{3}p_{8}-
  p_{7}p_{4} \rangle
  \end{equation}
  is binomial. For this ideal the first four generators correspond to extended paths and the other two are the path differences of the blue and green
  stages. Furthermore $\M_{\T_{1}}$ is toric and $\ker \varphi= \impaths$.
  \medskip
  
  Following the strategy from the previous section, we now first characterise staged trees in terms of (maximal) path differences and then use these results to understand the ideal of model invariants. 

Below, we thus start by giving a necessary condition for a path difference to fully extend. Interestingly, this condition is the same that comes up  in \cref{thm:toric} to decide wether $\ker \varphi$ is toric. In the discussion at the end of this paper, we conjecture that this is the case because for toric staged trees, the ideal $\impaths$ is equal to the kernel of $\varphi$.

\begin{lemma}\label{lem:extended}
Suppose that $v\sim w$ and $t(v_i)t(w_j)-t(w_i)t(v_j)=0$ for all $i,j \in \{1,\ldots,k\}$. Then every path $(v_i\to w_j,w_i\to v_j)$ fully extends.
\end{lemma}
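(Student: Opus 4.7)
My plan is to prove the contrapositive: any partial extension $(v'_i, w'_j, w'_i, v'_j)$ of $(v_i \to w_j, w_i \to v_j)$ with at least one non-leaf endpoint admits a proper further extension satisfying condition (2). Since $\T$ is finite, iterating this reduction forces every maximal extension to have all four endpoints in the leaves of $\T$, which is exactly the definition of fully extending.

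The first step is the algebraic translation. Writing $\pi(u \to u') := \prod_{e \in E(u \to u')}\theta(e)$, condition (2) for the partial extension reads $M_0 := \pi(v_i \to v'_i)\pi(w_j \to w'_j) = \pi(w_i \to w'_i)\pi(v_j \to v'_j)$. Using $t(u) = \sum_{\ell \in L(u)}\pi(u \to \ell)$ with $L(u)$ the leaves of $\T(u)$, the hypothesis $t(v_i)t(w_j) = t(w_i)t(v_j)$ becomes an equality of polynomials in $\R[\Theta]$, equivalently an equality of multisets of monomials indexed by leaf-pairs on the two sides. Note that comparing total degrees of monomials shows that condition (1) in the definition of extension is automatically implied by the monomial equality required in condition (2), so the only real constraint is condition (2).

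By symmetry we may assume $v'_i$ is not a leaf. I would then choose any leaves $\ell_1 \in L(v'_i)$ and $\ell_2 \in L(w'_j)$ and form $N := M_0 \cdot \pi(v'_i \to \ell_1)\pi(w'_j \to \ell_2) = \pi(v_i \to \ell_1)\pi(w_j \to \ell_2)$. This monomial appears in $t(v_i)t(w_j)$ and hence, by the hypothesis, also in $t(w_i)t(v_j)$, producing leaves $\ell_3 \in L(w_i)$ and $\ell_4 \in L(v_j)$ with $\pi(w_i \to \ell_3)\pi(v_j \to \ell_4) = N$. If these can be chosen inside $L(w'_i) \times L(v'_j)$, then $(\ell_1, \ell_2, \ell_3, \ell_4)$ is a proper extension of the partial (at least the edges along $v'_i \to \ell_1$ are new since $\ell_1 \neq v'_i$), contradicting maximality.

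The main obstacle is thus to show that $\ell_3$ and $\ell_4$ can be arranged in $L(w'_i)$ and $L(v'_j)$ respectively, equivalently that the supports of $t(v'_i)t(w'_j)$ and $t(w'_i)t(v'_j)$ share a monomial in $\R[\Theta]$. I would handle this by peeling off the partial extension layer by layer using the recursion $t(u) = \sum_{c} \theta(u,c)t(c)$ at each of $v_i, w_j, w_i, v_j$, combined with the full collection of $k^2$ hypothesised identities over all children pairs of $v$ and $w$, in order to propagate the top-level polynomial identity down to a localised identity of the form $M_0\,(t(v'_i)t(w'_j) - t(w'_i)t(v'_j)) = 0$ at the partial extension. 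From such a local identity the required compatible leaves on the $w$-side are read off directly, completing the reduction and hence the lemma.
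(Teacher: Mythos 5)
Your contrapositive set-up and your diagnosis of the crux are both accurate: matching a completed monomial $\pi(v_i\to\ell_1)\pi(w_j\to\ell_2)$ against a term of $t(w_i)t(v_j)$ only yields leaves $\ell_3\in L(w_i)$, $\ell_4\in L(v_j)$, and maximality of the given partial extension is contradicted only if these can be taken below $w'_i$ and $v'_j$. The genuine gap is the step you propose to close this with. The localised identity $t(v'_i)t(w'_j)=t(w'_i)t(v'_j)$ is not proved in your sketch, and it does not follow from the stated hypothesis by any ``peeling off layer by layer'' argument: the hypothesis consists only of the $k^2$ identities at the children of $v$ and $w$, and the cancellations inside $t(v_i)t(w_j)-t(w_i)t(v_j)$ need not respect the branch singled out by your partial extension, because the labels of a stage may reappear in florets deeper inside the subtrees. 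Concretely, let $v_i$, $w_i$ and all their children carry the floret $(s_0,s_1)$, with $t(v_i)=s_0\bigl(s_0G+s_1A\bigr)+s_1\bigl(s_0B+s_1H\bigr)$ and $t(w_i)=s_0\bigl(s_0G+s_1B\bigr)+s_1\bigl(s_0A+s_1H\bigr)$, where $G,A,B,H$ are sums over pairwise disjoint sets of fresh leaf labels, and let $t(v_j)=t(w_j)=C$ on further fresh labels. Then $t(v_i)=t(w_i)$, so every hypothesised identity for the stage of $v$ and $w$ holds; but the extension obtained by adding the two edges labelled $s_0,s_1$ at the $v_i$-end of $v_i\to w_j$ and at the $w_i$-end of $w_i\to v_j$ (condition (2) holds, both added products being $s_0s_1$) ends at vertices whose subtree polynomials are $A$ and $B$, and $A\cdot C\neq B\cdot C$. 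So your localised identity fails there; worse, since $A$ and $B$ use disjoint labels, no edges below these two endpoints can ever be matched, so the intended greedy continuation is stuck. Any propagation of the top-level identity down the tree needs control over the nested florets (for instance condition $(\star)$ for the deeper stages, or position assumptions), which you have not assumed and which the single pair $v\sim w$ does not supply.

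For comparison, the paper's proof does not localise at intermediate vertices at all: it expands $t(v_i)t(w_j)-t(w_i)t(v_j)$ into complete terms, reads each cancellation $m_{v_i}m_{w_j}=m_{w_i}m_{v_j}$ directly as a leaf-to-leaf extension of $(v_i\to w_j,\,w_i\to v_j)$, and identifies maximal extensions with such cancellations of full terms. Your route is genuinely different in its key step, and that key step is exactly where the argument breaks; as written, the proposal does not establish the lemma.
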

\begin{proof}
For convenience, assume $i=1,j=2$. Suppose that $t(v_1)t(w_2)-t(w_1)t(v_2)=0$. For $v\in V$, each $t(v)$ is a sum of atomic probabilities of the subtree $\T(v)$. The expansion of
$t(v_1)t(w_2)$ is a sum of products of atomic probabilities in $\T(v_1),\T(w_2)$. Therefore each term in $t(v_1)t(w_2)$ cancels with a term in $t(w_1)t(v_2)$. We may write this cancelation as
$m_{v_{1}}m_{w_{2}}-m_{w_{1}}m_{v_{2}}=0$  where \[ m_{v_{1}}= \!\!\!\!\prod_{e\in E(v_{1}\to l_{1})}\!\!\!\! \theta(e),\;\; m_{w_{2}}=\!\!\!\! \prod_{e\in E(w_{2}\to l_{2})}\!\!\!\! \theta(e),\;\;
m_{w_{1}}= \!\!\!\! \prod_{e\in E(w_{1}\to l_{3})}\!\!\!\! \theta(e), \;\; m_{v_{1}}= \!\!\!\! \prod_{e\in E(v_{2}\to l_{4})}\!\!\!\! \theta(e)\]
are atomic probabilities of the subtrees $\T(v_{1}), \T(w_{2}),\T(w_{1}), \T(v_{2})$, respectively. Thus the pair
$(l_{1}\to l_{2},l_{3}\to l_{4})$ is an extension of $(v_{1}\to w_{2}, w_{1}\to v_{2})$, namely
\begin{eqnarray*}
l_{1}\to l_{2} &=& l_{1}\to v_{1}\to w_{2} \to l_{2}\\
l_{3}\to l_{4} &=& l_{3}\to v_{2}\to w_{1}\to l_{4}
\end{eqnarray*}
and $m_{v_{1}}m_{w_{2}}=m_{w_{1}}m_{v_{2}}$. The path $(l_{1}\to l_{2},l_{3}\to l_{4})$ is a full maximal extension
because $l_{1},l_{2},l_{3},l_{4}$ are leaves of $\T$. Conversely, every maximal extension is a cancellation of terms in $t(v_1)t(w_2)-t(w_1)t(v_2)$.
\end{proof}

We can now directly derive the following result, providing a sufficient condition for $\impaths$ to be binomial.

\begin{theorem}
Let $\T$ be a staged tree and suppose that condition \cref{eq:star} holds for all $v,w \in \T$ in the same stage. Then 
$\impaths$ is a binomial ideal.
\end{theorem}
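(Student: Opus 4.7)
The plan is to reduce the claim directly to Lemma~\ref{lem:extended} together with the trivial observation that for a leaf $\ell\in V$ the set $[\ell]$ is a singleton.

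First I would unpack the definition of $\impaths$. By construction,
\begin{equation}
\impaths = \sum_{v\sim w \text{ in } \T} I_{\mathrm{max}(v\sim w)},
\end{equation}
and each summand $I_{\mathrm{max}(v\sim w)}$ is generated by path differences of the form $p_{[v_h]}p_{[w_t]} - p_{[w_h]}p_{[v_t]}$, one for each maximal extension $(v_h\to w_t,\; w_h\to v_t)$ of a base pair $(v_i\to w_j,\; w_i\to v_j)$ coming from the stage $v\sim w$. So to prove that $\impaths$ is binomial it is enough to check that each such generator is, individually, a difference of two monomials in $\R[p_1,\ldots,p_n]$.

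Next I would invoke the hypothesis. Since condition \cref{eq:star} holds for every pair of staged vertices $v\sim w$ in $\T$, Lemma~\ref{lem:extended} applies to each such pair and tells us that \emph{every} base path $(v_i\to w_j,\; w_i\to v_j)$ fully extends. By definition of fully extending, this means that for every maximal extension $(v_h\to w_t,\; w_h\to v_t)$ associated to $v\sim w$, all four endpoints $v_h,w_t,w_h,v_t$ are leaves of~$\T$.

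Finally I would use the fact that when $\ell$ is a leaf of $\T$, exactly one root-to-leaf path passes through $\ell$, so $[\ell]$ is a singleton and $p_{[\ell]}$ is a single indeterminate $p_i$ rather than a genuine sum. Consequently each generator $p_{[v_h]}p_{[w_t]} - p_{[w_h]}p_{[v_t]}$ of $I_{\mathrm{max}(v\sim w)}$ is literally a difference of two degree-$2$ monomials in the atomic probability variables, i.e.\ a binomial. Summing over all stages $v\sim w$ preserves this property, so $\impaths$ is generated by binomials and hence is a binomial ideal. There is no real obstacle here: the work has already been absorbed into Lemma~\ref{lem:extended}, and the remaining step is the observation that $p_{[\ell]}=p_i$ when $\ell$ is a leaf, which turns formal \enquote{sum} generators into honest binomial ones.
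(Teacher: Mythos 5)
Your proposal is correct and follows essentially the same route as the paper: invoke Lemma~\ref{lem:extended} under hypothesis \cref{eq:star} to conclude that every base pair fully extends, so all maximal extensions end at leaves, and then observe that $p_{[\ell]}$ is a single indeterminate for a leaf $\ell$, making each generator of $\impaths$ an honest binomial. The only difference is that you spell out explicitly the final observation that the paper leaves implicit in its definition of full extension.
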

\begin{proof}
Using \cref{lem:extended} we see that the path differences in $I_{v\sim w}$ fully extend for all vertices of $\T$ in the same stage.
By the definition of full extension of paths this implies that the generators of $\impaths$ are binomials.
\end{proof}



\section{An example and connections to graphical models}\label{sect:example}


In this section we illustrate the concepts introduced over the course of this paper in a simple real system: see \citet[Examples~3.4 and~3.6]{Collazo.etal.2018} for details. The discrete statistical models represented by the graphs in \cref{fig:biology} were built to explain the unfoldings of events in a cell culture. Within this culture the environment might be hostile or benign and the activity between cells might be high or low, independent of the state of the environment. If the environment is hostile then cells suffer damage and either die or survive with the same respective probabilities. Surviving cells make either a full or partial recovery, independent of their history. 

These highly asymmetric stories can be represented by the undirected graph given in \cref{fig:biodec}, the directed acyclic graph in \cref{fig:bioBN}, the staged tree in \cref{fig:biotree} or the staged tree in \cref{fig:biostaged}. We can now analyse the different statistical and algebraic properties of each of these models, contrasting our results to those obtained by \citet{Garcia.etal.2005,Geiger.etal.2006} for decomposable graphical models and Bayesian networks.
\medskip

\begin{figure}
\centering
\begin{subfigure}{\textwidth}
\centering
\begin{tikzpicture}
\renewcommand{\stage}[2]{\tikz[baseline=(char.base)]{
            \node[shape=circle,draw,inner sep=1.5pt,fill={#1}] (char) {$v_{#2}$};}}
\renewcommand{\xx}{2.5}
\renewcommand{\yy}{0.6}

\draw [dashed] 	(0.85*\xx,-0.5*\yy) rectangle  (1.15*\xx,15.5*\yy);
\draw [dashed] 	(1.85*\xx,-0.5*\yy) rectangle (2.15*\xx,15.5*\yy);
\draw [dashed] 	(2.85*\xx,-0.5*\yy) rectangle  (3.15*\xx,15.5*\yy);
\node at (0*\xx,16*\yy) {$X_1$};
\node at (1*\xx,16*\yy) {$X_2$};
\node at (2*\xx,16*\yy) {$X_3$};
\node at (3*\xx,16*\yy) {$X_4$};
			
\node (v0) at (0*\xx,7.5*\yy) {\stage{Gray}{0}};
\node (v1) at (1*\xx,11.5*\yy) {\stage{SkyBlue}{1}};
\node (v2) at (1*\xx,3.5*\yy) {\stage{SkyBlue}{2}};
\node (v3) at (2*\xx,13.5*\yy) {\stage{Green}{3}};
\node (v4) at (2*\xx,9.5*\yy) {\stage{Green}{4}};
\node (v5) at (2*\xx,5.5*\yy) {\stage{Plum!90}{5}};
\node (v6) at (2*\xx,1.5*\yy) {\stage{Thistle!80}{6}};
\node (v7) at (3*\xx,14.5*\yy) {\stage{Red}{7}};
\node (v8) at (3*\xx,12.5*\yy) {\stage{Yellow}{8}};
\node (v9) at (3*\xx,10.5*\yy) {\stage{Red}{9}};
\node[shape=circle,draw,inner sep=0.5pt,outer sep=3pt,fill=Yellow] (v10) at (3*\xx,8.5*\yy) {$v_{10}$};
\node[shape=circle,draw,inner sep=0.5pt,outer sep=3pt,fill=Red] (v11) at (3*\xx,6.5*\yy) {$v_{11}$};
\node[shape=circle,draw,inner sep=0.5pt,outer sep=3pt,fill=Yellow] (v12) at (3*\xx,4.5*\yy) {$v_{12}$};
\node[shape=circle,draw,inner sep=0.5pt,outer sep=3pt,fill=Red] (v13) at (3*\xx,2.5*\yy) {$v_{13}$};
\node[shape=circle,draw,inner sep=0.5pt,outer sep=3pt,fill=Yellow] (v14) at (3*\xx,0.5*\yy) {$v_{14}$};
\node (v15) at (4*\xx,15*\yy) {\bt};
\node (v16) at (4*\xx,14*\yy) {\bt};
\node (v17) at (4*\xx,13*\yy) {\bt};
\node (v18) at (4*\xx,12*\yy) {\bt};
\node (v19) at (4*\xx,11*\yy) {\bt};
\node (v20) at (4*\xx,10*\yy) {\bt};
\node (v21) at (4*\xx,9*\yy) {\bt};
\node (v22) at (4*\xx,8*\yy) {\bt};
\node (v23) at (4*\xx,7*\yy) {\bt};
\node (v24) at (4*\xx,6*\yy) {\bt};
\node (v25) at (4*\xx,5*\yy) {\bt};
\node (v26) at (4*\xx,4*\yy) {\bt};
\node (v27) at (4*\xx,3*\yy) {\bt};
\node (v28) at (4*\xx,2*\yy) {\bt};
\node (v29) at (4*\xx,1*\yy) {\bt};
\node (v30) at (4*\xx,0*\yy) {\bt};

\draw [->] 	(v0) -- node [above, sloped] {\ls{hostile}} node [below,sloped] {\ls{$X_1=0$}} (v1);
\draw [->] 	(v0) -- node [below, sloped] {\ls{benign}} node [above,sloped] {\ls{$X_1=1$}} (v2);
\draw [->] 	(v1) -- node [above, sloped] {\ls{high}} node [below,sloped] {\ls{$X_2=0$}} (v3);
\draw [->] 	(v1) -- node [below, sloped] {\ls{low}} node [above,sloped] {\ls{$X_2=1$}} (v4);
\draw [->] 	(v2) -- node [above, sloped] {\ls{high}} node [below,sloped] {\ls{$X_2=0$}} (v5);
\draw [->] 	(v2) -- node [below, sloped] {\ls{low}}  node [above,sloped] {\ls{$X_2=1$}}(v6);

\draw [->] 	(v3) -- node [above, sloped] {\ls{die}} node [below=-2pt,sloped] {\ls{$X_3=0$}} (v7);
\draw [->] 	(v3) -- node [below, sloped] {\ls{survive}} node [above=-3pt,sloped,pos=0.7] {\ls{$X_3=1$}} (v8);
\draw [->] 	(v4) -- node [above, sloped] {\ls{die}} node [below=-2pt,sloped] {\ls{$X_3=0$}} (v9);
\draw [->] 	(v4) -- node [below, sloped] {\ls{survive}} node [above=-3pt,sloped,pos=0.7] {\ls{$X_3=1$}} (v10);
\draw [->] 	(v5) -- node [above, sloped] {\ls{die}} node [below=-2pt,sloped] {\ls{$X_3=0$}} (v11);
\draw [->] 	(v5) -- node [below, sloped] {\ls{survive}} node [above=-3pt,sloped,pos=0.7] {\ls{$X_3=1$}} (v12);
\draw [->] 	(v6) -- node [above, sloped] {\ls{die}} node [below=-2pt,sloped] {\ls{$X_3=0$}} (v13);
\draw [->] 	(v6) -- node [below, sloped] {\ls{survive}} node [above=-3pt,sloped,pos=0.7] {\ls{$X_3=1$}} (v14);

\draw [->] 	(v7) -- node [above=-2pt, sloped] {\ls{full $X_4=0$}} (v15);
\draw [->] 	(v7) -- node [below=-2pt, sloped] {\ls{partial}} node [above=-3pt,sloped,pos=0.7] {\ls{$X_4=1$}} (v16);
\draw [->] 	(v8) -- node [above=-2pt, sloped] {\ls{full $X_4=0$}} (v17);
\draw [->] 	(v8) -- node [below=-2pt, sloped] {\ls{partial}} node [above=-3pt,sloped,pos=0.7] {\ls{$X_4=1$}} (v18);
\draw [->] 	(v9) -- node [above=-2pt, sloped] {\ls{full $X_4=0$}} (v19);
\draw [->] 	(v9) -- node [below=-2pt, sloped] {\ls{partial}} node [above=-3pt,sloped,pos=0.7] {\ls{$X_4=1$}} (v20);
\draw [->] 	(v10) -- node [above=-2pt, sloped] {\ls{full $X_4=0$}} (v21);
\draw [->] 	(v10) -- node [below=-2pt, sloped] {\ls{partial}} node [above=-3pt,sloped,pos=0.7] {\ls{$X_4=1$}} (v22);
\draw [->] 	(v11) -- node [above=-2pt, sloped] {\ls{full $X_4=0$}} (v23);
\draw [->] 	(v11) -- node [below=-2pt, sloped] {\ls{partial}} node [above=-3pt,sloped,pos=0.7] {\ls{$X_4=1$}} (v24);
\draw [->] 	(v12) -- node [above=-2pt, sloped] {\ls{full $X_4=0$}} (v25);
\draw [->] 	(v12) -- node [below=-2pt, sloped] {\ls{partial}} node [above=-3pt,sloped,pos=0.7] {\ls{$X_4=1$}} (v26);
\draw [->] 	(v13) -- node [above=-2pt, sloped] {\ls{full $X_4=0$}} (v27);
\draw [->] 	(v13) -- node [below=-2pt, sloped] {\ls{partial}} node [above=-3pt,sloped,pos=0.7] {\ls{$X_4=1$}} (v28);
\draw [->] 	(v14) -- node [above=-2pt, sloped] {\ls{full $X_4=0$}} (v29);
\draw [->] 	(v14) -- node [below=-2pt, sloped] {\ls{partial}} node [above=-3pt,sloped,pos=0.7] {\ls{$X_4=1$}} (v30);

\node at (4.4*\xx,15*\yy) {$p_{0000}$};
\node at (4.4*\xx,14*\yy) {$p_{0001}$};
\node at (4.4*\xx,13*\yy) {$p_{0010}$};
\node at (4.4*\xx,12*\yy) {$p_{0011}$};
\node at (4.4*\xx,11*\yy) {$p_{0100}$};
\node at (4.4*\xx,10*\yy) {$p_{0101}$};
\node at (4.4*\xx,9*\yy) {$p_{0110}$};
\node at (4.4*\xx,8*\yy) {$p_{0111}$};
\node at (4.4*\xx,7*\yy) {$p_{1000}$};
\node at (4.4*\xx,6*\yy) {$p_{1001}$};
\node at (4.4*\xx,5*\yy) {$p_{1010}$};
\node at (4.4*\xx,4*\yy) {$p_{1011}$};
\node at (4.4*\xx,3*\yy) {$p_{1100}$};
\node at (4.4*\xx,2*\yy) {$p_{1101}$};
\node at (4.4*\xx,1*\yy) {$p_{1110}$};
\node at (4.4*\xx,0*\yy) {$p_{1111}$};
\end{tikzpicture}
\caption{A staged tree $\T$ representing a context-specific Bayesian network.\label{fig:biotree}}
\end{subfigure}

\begin{subfigure}[b]{0.28\textwidth}
\begin{tikzpicture}
\renewcommand{\yy}{0.7}
\renewcommand{\xx}{1.15}
\node (x1) at (0,\yy) {$X_1$};
\node[above=4pt] at (x1) {\ls{environment}};
\node (x2) at (0,-\yy) {$X_2$};
\node[below=4pt] at (x2) {\ls{activity}};
\node (x3) at (\xx,0) {$X_3$};
\node[below=5pt] at (x3) {\ls{survival}};
\node (x4) at (2*\xx,0) {$X_4$};
\node[above=4pt] at (x4) {\ls{recovery}};
\draw[-]    (x1) -- (x2);
\draw[-] 	(x1) -- (x3);
\draw[-]	(x2) -- (x3);
\draw[-]	(x3) -- (x4); 
\node at (0,-3*\yy) {~};
\end{tikzpicture}
\caption{A decomposable undirected graphical model. \label{fig:biodec}}
\end{subfigure}~
\begin{subfigure}[b]{0.28\textwidth}
\centering
\begin{tikzpicture}
\renewcommand{\yy}{0.7}
\renewcommand{\xx}{1.15}
\node (x1) at (0,\yy) {$X_1$};
\node[above=4pt] at (x1) {\ls{environment}};
\node (x2) at (0,-\yy) {$X_2$};
\node[below=4pt] at (x2) {\ls{activity}};
\node (x3) at (\xx,0) {$X_3$};
\node[below=5pt] at (x3) {\ls{survival}};
\node (x4) at (2*\xx,0) {$X_4$};
\node[above=4pt] at (x4) {\ls{recovery}};
\draw[->] 	(x1) -- (x3);
\draw[->]	(x2) -- (x3);
\draw[->]	(x3) -- (x4); 
\node at (0,-3*\yy) {~};
\end{tikzpicture}
\caption{A Bayesian network that is not decomposable. \label{fig:bioBN}}
\end{subfigure}~
\begin{subfigure}[b]{0.4\textwidth}
\centering
\begin{tikzpicture}
\renewcommand{\stage}[2]{\tikz[baseline=(char.base)]{
            \node[shape=circle,draw,inner sep=1.5pt,fill={#1}] (char) {$v_{#2}$};}}
\renewcommand{\xx}{1.25}
\renewcommand{\yy}{0.8}

\node (v0) at (0*\xx,2*\yy) {\stage{Gray}{0}};
\node (v1) at (1*\xx,3*\yy) {\stage{SkyBlue}{1}};
\node (v2) at (1*\xx,1*\yy) {\stage{SkyBlue}{2}};
\node (v3) at (2*\xx,4*\yy) {\stage{Green}{3}};
\node (v4) at (2*\xx,2.5*\yy) {\stage{Green}{4}};
\node (v5) at (2*\xx,1*\yy) {\bt};
\node (v6) at (2*\xx,0*\yy) {\bt};
\node (v7) at (3*\xx,5*\yy) {\bt};
\node (v8) at (3*\xx,4*\yy) {\stage{Yellow}{8}};
\node (v9) at (3*\xx,3*\yy) {\bt};
\node[shape=circle,draw,inner sep=0.5pt,outer sep=3pt,fill=Yellow] (v10) at (3*\xx,2*\yy) {$v_{10}$};
\node (v11) at (4*\xx,4.5*\yy) {\bt};
\node (v12) at (4*\xx,3.5*\yy) {\bt};
\node (v13) at (4*\xx,2*\yy) {\bt};
\node (v14) at (4*\xx,1*\yy) {\bt};

\draw [->] 	(v0) -- node [above, sloped] {\ls{hostile}} (v1);
\draw [->] 	(v0) -- node [below, sloped] {\ls{benign}} (v2);
\draw [->] 	(v1) -- node [above, sloped] {\ls{high}} (v3);
\draw [->] 	(v1) -- node [below, sloped] {\ls{low}} (v4);
\draw [->] 	(v2) -- node [above, sloped] {\ls{high}} (v5);
\draw [->] 	(v2) -- node [below, sloped] {\ls{low}} (v6);

\draw [->] 	(v3) -- node [above, sloped] {\ls{die}} (v7);
\draw [->] 	(v3) -- node [below, sloped] {\ls{survive}} (v8);
\draw [->] 	(v4) -- node [above, sloped] {\ls{die}} (v9);
\draw [->] 	(v4) -- node [below, sloped] {\ls{survive}} (v10);
\draw [->] 	(v8) -- node [above, sloped] {\ls{full}} (v11);
\draw [->] 	(v8) -- node [below, sloped] {\ls{partial}} (v12);
\draw [->] 	(v10) -- node [above, sloped] {\ls{full}} (v13);
\draw [->] 	(v10) -- node [below, sloped] {\ls{partial}} (v14);
\end{tikzpicture}
\caption{A staged tree $\T_{>0}$ which is not a context-specific Bayesian network. \label{fig:biostaged}}
\end{subfigure}
\caption{Four graphical models for the unfoldings of events in a cell culture.\label{fig:biology}}
\end{figure}

Consider first the decomposable model from \cref{fig:biodec}. This graph represents the single conditional independence assumption that recovery of a cell is independent of its activity and of the state of the environment, given survival. In symbols, ${X_4\independent (X_1,X_2)~|~X_3}$. As a staged tree, this model can be represented by the graph in \cref{fig:biotree} with the vertices $v_{7},v_{9},v_{11},v_{13}$ and $v_{8},v_{10},v_{12},v_{14}$ in the same stage, respectively. So rather than working on the undirected graph, we can equivalently exclusively consider the red and yellow colouring of this tree, for the moment ignoring the other colours: for simplicity, we denote this staged tree as $\T_{\text{dec}}$. The ideal of model invariants of $\T_{\text{dec}}$ is given by
\begin{equation}
I_{\T_{\text{dec}}}=I_{\text{red}}+I_{\text{yellow}}
\end{equation}
where the generators of the stage ideals $I_{\text{red}}={I_{v_7\sim v_9}+I_{v_9\sim v_{11}}+I_{v_{11}\sim v_{13}}}$ and $I_{\text{yellow}}=I_{v_8\sim v_{10}}+I_{v_{10}\sim v_{12}}+I_{v_{12}\sim v_{14}}$ are odds-ratio differences which can be read from the tree as
\begin{equation}
\begin{split}
I_{\text{red}}=\ideal{&p_{1001}p_{1100} - p_{1000}p_{1101},
p_{0101}p_{1100} - p_{0100}p_{1101},
p_{0001}p_{1100} - p_{0000}p_{1101},\\
&p_{0101}p_{1000} - p_{0100}p_{1001},
p_{0001}p_{1000} - p_{0000}p_{1001},
p_{0001}p_{0100} - p_{0000}p_{0101}}\\
I_{\text{yellow}}=\ideal{&p_{1011}p_{1110} - p_{1010}p_{1111},
p_{0111}p_{1110} - p_{0110}p_{1111},
p_{0011}p_{1110} - p_{0010}p_{1111},\\
&p_{0111}p_{1010} - p_{0110}p_{1011},
p_{0011}p_{1010} - p_{0010}p_{1011},
p_{0011}p_{0110} - p_{0010}p_{0111}}.
\end{split}
\end{equation}

Because $\T_{\text{dec}}$ is a binary tree, the ideal $I_{\T_{\text{dec}}}=\ipaths$ is equal to the ideal generated by path differences. By construction, $I_{\T_{\text{dec}}}$ is also equal to the ideal $I_{\text{local}(G)}$ generated by cross-product differences coding the local Markov property in the decomposable graph $G$ in \cref{fig:biodec}, as defined by \citet{Geiger.etal.2006}. In particular, the model is thus equal to the variety $\var(I_{\T_{\text{dec}}})=\var(I_{\text{local}(G)})$ intersected with the probability simplex. To illustrate the direct translation of our methods into their framework, here we use notation used by the authors cited above and denote by $p_{ijkl}=P(X_1=i,X_2=j,X_3=k,X_4=l)$ an atomic probability and by $p_{ij++}=\sum_{kl}p_{ijkl}$ a marginal probability, for $i,j,k,l=0,1$. Naturally, the atomic probabilities are attached to leaves of the tree, so $p_{ijkl}$ is the probability of the root-to-leaf path labelled $X_1=i,X_2=j,X_3=k,X_4=l$. The marginal probabilities can equivalently clearly be expressed as probabilities of vertex-centred events in the tree graph: for instance, $p_{00++}=p_{[v_3]}=\theta(v_0,v_1)\theta(v_1,v_3)\cdot t(v_3)$ where the labels $\theta(v_0,v_1)=P(X_1=0)$ and $\theta(v_1,v_3)=P(X_2=0|X_1=0)$ are conditional probabilities and $t(v_3)$ is the sum of atomic probabilities in the induced subtree $\T(v_3)$.

We observe that the ideal $I_{\text{red}}+I_{\text{yellow}}$ is toric because the path differences coding the red and the yellow stage belong to pairs of paths which extend to the leaves of the tree simply because the children of these stages are leaves. Using any computer algebra software, we can check that the ideal of model invariants of the staged tree is also equal to the kernel of the algebraic parametrisation: $I_{\T_{\text{dec}}}=\ker\varphi$.

Equivalently, \citet{Geiger.etal.2006} prove in their Theorem~4.3 that $I_{\T_{\text{dec}}}=I_{\text{local}(G)}$ is toric because it is the ideal of model invariants of a decomposable graphical model. We can see here that, in contrast to the staged tree, the generators of this ideal cannot be directly read from the decomposable graph but need to be calculated via equations associated to the saturated conditional independence statement ${X_4\independent (X_1,X_2)~|~X_3}$. This can be achieved for instance using the \texttt{Macaulay2}-package \texttt{GraphicalModels} \citep{GraphicalModelsSource}.
\medskip

Consider now the Bayesian network given in \cref{fig:bioBN} which is not decomposable. This graph codes the additional condition that the state of the environment and the activity within a cell culture are independent of each other: so here, $X_1\independent X_2$ as well as ${X_4\independent (X_1,X_2)~|~X_3}$. This model can alternatively be represented by a staged tree denoted $\T_{\text{BN}}$ which has the same coloured graph as $\T_{\text{dec}}$ but with an additional stage $v_1\sim v_2$, coloured blue. The ideal of model invariants of this tree is again generated by path differences and is equal to $I_{\T_{\text{BN}}}=I_{\T_{\text{dec}}}+I_{\text{blue}}$ where
\begin{equation}
I_{\text{blue}}=\ideal{p_{[v_3]}p_{[v_6]}-p_{[v_4]}p_{[v_5]}}
=\ideal{p_{00++}p_{11++}-p_{01++}p_{10++}}.
\end{equation}

Indeed, the full ideal $\ker \varphi $ of this tree is not necessarily toric because the condition \cref{eq:star} is not fulfilled: we can easily use the tree graph to check that $t(v_3)t(v_6)\not=t(v_4)t(v_5)$.

Using any computer algebra software, we find that the ideal of model invariants $I_{\T_{\text{BN}}}=\ker\varphi$ is equal to the kernel 
of the algebraic parametrisation. We expect this result because in this case $\ipaths=\impaths$ see \cref{conj:mpaths} in the 
discussion. The theory developed by \citet{Geiger.etal.2006} did not supply the means to find this algebraic characterisation, their study is restricted to decomposable models. However, here we study a Bayesian network on four binary random variables, so one of the subjects of the thorough analysis provided by \citet[Table~1, \#21]{Garcia.etal.2005}. Using prime decomposition of the conditional independence ideal, these authors find that the ideal of model invariants of this Bayesian network is prime, of codimension~7, of degree~32, and has 13 minimal generators. From
the tree we can readily obtain the same calculations for the codimension and the number of minimal generators of the model by using the dimension count in \cref{sub:dim} and the equations in \cref{eq:ipaths} respectively.
\medskip

In a third step, consider the context-specific Bayesian network represented by the graph in \cref{fig:bioBN} and with the extra condition that $X_3\independent X_2~|~X_1=0$ is true. In words, we now embed the information that the probability of survival of a cell in the culture does not depend on its activity, given that the environment was hostile. Whilst the directed acyclic graph cannot code this condition graphically, we can immediately read it from the green stage in the staged tree $\T$ given in \cref{fig:biotree}. The full ideal of model invariants $I_{\T}=I_{\T_{\text{BN}}}+I_{\text{green}}$ is now equal to
\begin{equation}
I_{\T}=I_{\text{red}}+I_{\text{yellow}}+I_{\text{blue}}+I_{\text{green}}.
\end{equation}
Here, the green path differences generate the ideal
\begin{equation}
I_{\text{green}}=\ideal{p_{[v_7]}p_{[v_{10}]}-p_{[v_9]}p_{[v_8]}}
=\ideal{p_{000+}p_{011+}-p_{010+}p_{001+}}
\end{equation}
in the obvious notation for marginal probabilities. This result extends the results of both of  \citet{Garcia.etal.2005,Geiger.etal.2006} who do not study context-specific Bayesian networks.

Using \cref{thm:toric}, we can see again that $\ker \varphi $ is not necessarily toric. This is because even though now $v_3$ and $v_4$ are in the same position, implying that $t(v_3)=t(v_4)$, we still have $t(v_5)\not=t(v_6)$. So $t(v_3)t(v_6)\not=t(v_4)t(v_5)$ and condition \cref{eq:star} is not fulfilled.

Because in the staged tree $\T$ tree path differences can be extended, we find that in this case the ideal of model invariants is not equal to the kernel of the algebraic parametrisation, $I_{\T}\not=\ker\varphi$. In fact $I_{\T}$ is not radical and has five associated primes, one of them being $\ker \varphi$.
We can computationally check in this case that the kernel of $\varphi$ is equal to the ideal of maximal paths: $\ker\varphi=\impaths$. This observation, together with all of the examples presented in this paper, is strong evidence for \cref{conj:mpaths} which we formulate in the discussion at the end of this paper.
\medskip

In a final step, we observe that many of the unfoldings in the graph of $\T$ as given in \cref{fig:biotree} are logically impossible. For instance, the measure of recovery for cells which have died is nonsensical. In the same fashion, the problem description states that within the cell culture, cells get damaged only if the surrounding environment was hostile. This implies that the measure of survival is nonsensical in benign environments. As a result, many of the atomic probabilities in the context-specific Bayesian network need to be assigned probability zero. We can avoid this redundancy by directly modelling the given situation using the staged tree depicted in \cref{fig:biostaged}. We denote this tree $\T_{>0}$ because all of its labels are strictly positive.  The dimension of the corresponding model $\M_{\T_{>0}}$ is much smaller: the model variety is of dimension four in seven-dimensional space, rather than of dimension six in sixteen-dimensional space, as was the case for $\treemodel$.

The ideal of model invariants for the staged tree $\T_{>0}$ can be calculated in exactly the same fashion as presented above, now resulting in
\begin{equation}
\begin{split}
I_{\T_{>0}}&=I'_{\text{yellow}}+I'_{\text{green}}+I'_{\text{blue}}\\
&=\ideal{p_2p_6-p_5p_3}
+\ideal{p_1(p_5+p_6)-p_4(p_2+p_3)}
+\ideal{(p_1+p_2+p_3)p_8-p_7(p_4+p_5+p_6)}
\end{split}
\end{equation}
where we read the atomic probabilities from top to bottom in \cref{fig:biostaged}.  Again, we can calculate that $I_{\T_{>0}}\not=\ker\varphi$ but that the kernel of the algebraic parametrisation is given by the maximal paths:
\begin{equation}
\impaths=\ker\varphi=\ideal{p_3p_5-p_2p_6, p_2p_4-p_1p_5, p_3p_4-p_1p_6,p_4p_7-p_1p_8, p_5p_7-p_2p_8, p_6p_7-p_3p_8}.
\end{equation}
These generators form a Gr\"obner basis with respect to the reverse lexicographical term order with $p_1>\cdots >p_8$.

\section{Discussion}
\label{sect:discussion}

Throughout this text we have analysed the algebraic and geometric properties of the ideal $I_\T$ of model invariants of $\M_\T$. We have seen that this ideal has a distinguished prime component, namely $\ker \varphi$, and we have fully characterised in \cref{thm:toric} conditions under which this ideal is toric. Although we have not always explicitly stated it, all of the examples we have seen  in this paper have the property that $\ker \varphi$ is equal to the ideal $\impaths$. This leads us to:

\begin{conjecture}\label{conj:mpaths}
Let $\T$ be a staged tree. Then the kernel of $\varphi$ is exactly the ideal generated by maximal paths, 
\begin{equation}
\ker \varphi = \impaths.
\end{equation}
\end{conjecture}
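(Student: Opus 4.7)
The inclusion $\impaths\subseteq\ker\varphi$ holds essentially by construction. For every maximal extension $(\head{v}\to\tail{w},\head{w}\to\tail{v})$ of a pair $(v_i\to w_j,w_i\to v_j)$ with $v\sim w$, repeat the computation of \cref{eq:patheq} with $s_i,s_j$ replaced by the equal products $\prod_{e\in E(v_h\to w_t)\setminus E(v_1\to w_2)}\theta(e)=\prod_{e\in E(w_h\to v_t)\setminus E(w_1\to v_2)}\theta(e)$ guaranteed by condition~(2) of the extension definition. The corresponding path difference vanishes under $\varphi$, hence $\impaths\subseteq\ker\varphi$.

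For the reverse inclusion $\ker\varphi\subseteq\impaths$, the plan is to exploit \cref{cor:saturation} together with a primality argument for $\impaths$. Since $I_{\T}\subseteq\impaths\subseteq\ker\varphi$, saturating by $\mathbf{p}$ yields
\begin{equation}
\ker\varphi= I_{\T}:(\mathbf{p})^{\infty}\subseteq\impaths:(\mathbf{p})^{\infty}\subseteq\ker\varphi:(\mathbf{p})^{\infty}=\ker\varphi,
\end{equation}
so the conjecture reduces to showing $\impaths:(\mathbf{p})^{\infty}=\impaths$, i.e.\ that $\impaths$ is saturated with respect to $\mathbf{p}$. A clean way to obtain this is to prove that $\impaths$ is itself a prime ideal of the same Krull dimension as $\ker\varphi$. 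Since $\ker\varphi$ is prime with $\mathbf{p}\notin\ker\varphi$, equality of dimensions with $\impaths\subseteq\ker\varphi$ forces $\impaths=\ker\varphi$.

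I would establish primality of $\impaths$ by induction on the number of non-leaf vertices of $\T$. The base case consists of trees where every stage has only leaf children, so that all maximal extensions already terminate at leaves; here $\impaths$ is generated by binomials in the atomic probabilities, and coincides with the kernel of the monomial map $\phitoric$ by the discussion preceding \cref{cor:simple}, hence is toric and prime. For the inductive step, pick a stage $v\sim w$ whose children $v_i,w_i$ are furthest from the root; by the hypothesis applied to the subtrees $\T(v_i),\T(w_i)$ and their siblings, the \enquote{already maximal} path differences beneath this stage form a prime ideal, and the newly added maximal extensions through $v\sim w$ can be described explicitly in terms of the sums $p_{[v_i]},p_{[w_j]}$. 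The key algebraic step is to show that adjoining these new generators preserves primality, which should follow from exhibiting an explicit birational inverse analogous to the one in the proof of \cref{thm:algebra}, now using only the subset of the identities $\theta(v,v')=p_{[v']}/p_{[v]}$ that arises from maximal extensions rather than all odds ratios.

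The main obstacle is precisely this inductive step: when a stage $v\sim w$ lies above subtrees $\T(v)$ and $\T(w)$ whose polynomial shapes $t(v),t(w)$ differ (as in the non-toric setting), maximal extensions of a single pair of paths need not all terminate at leaves, and one has to show that the resulting mix of binomial and non-binomial generators still cuts out an irreducible variety. A promising alternative, supported by the last calculation in \cref{sect:example} where $\impaths$ is a reverse-lexicographic Gr\"obner basis, would be to prove that the maximal path differences form a Gr\"obner basis of $\ker\varphi$ under a term order in which $p_i$ attached to leaves deeper in the tree are larger; Buchberger's criterion applied to pairs of maximal path binomials would then reduce, via the sum identities \cref{eq:children}, to other maximal path generators, giving $\ker\varphi\subseteq\impaths$ directly without invoking primality.
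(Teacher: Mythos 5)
This statement is \cref{conj:mpaths}, which the paper leaves open: it is supported only by the computed examples (and by the observation, made when extensions are defined, that every maximal path difference lies in $\ker\varphi$), so there is no proof of the paper's to compare yours against. What you have written is a plan rather than a proof, and by your own admission the decisive step is missing. Your forward inclusion $\impaths\subseteq\ker\varphi$ is fine and is essentially already in the paper. Your reduction of the conjecture, via \cref{cor:saturation}, to the statement that $\impaths$ is saturated with respect to $\mathbf{p}$ is a reasonable reformulation, but the way you propose to establish it --- primality of $\impaths$ together with a dimension count, proved by induction on the non-leaf vertices --- is exactly the hard content, and the inductive step you describe (adjoining the new maximal extensions through a stage $v\sim w$ while preserving primality, in the regime where $t(v)\neq t(w)$ and maximal extensions stop at interior vertices) is flagged by you as an obstacle, not carried out. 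Even your base case is not free: when all maximal extensions terminate at leaves you assert $\impaths=\ker\phitoric$, but the discussion preceding \cref{cor:simple} only shows the path differences lie in $\ker\phitoric$, not that they generate it; that generation statement is essentially the paper's second conjecture (the toric variant with the Gr\"obner basis claim), so you are assuming a special case of what is open. The alternative route via Buchberger's criterion has the same status.

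There is also a smaller unproved step inside your saturation chain: you write $I_{\T}\subseteq\impaths\subseteq\ker\varphi$, but the paper only proves $I_{\T}\subseteq\ipaths\subseteq\ker\varphi$ (\cref{lem:subset}), and $\ipaths\subseteq\impaths$ is not automatic. A non-maximal path difference $p_{[v_i]}p_{[w_j]}-p_{[w_i]}p_{[v_j]}$ must be rewritten in terms of the differences of its maximal extensions, and these maximal extensions need not reach the leaves, need not be synchronised edge-by-edge (condition (2) only equates label \emph{products}, as in the cancellation argument of \cref{lem:extended}), and distinct maximal extensions of the same pair can overlap, so a naive summation can double count. Without this containment your chain does not give $\ker\varphi\subseteq\impaths:(\mathbf{p})^{\infty}$. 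In short: the architecture (forward inclusion plus $\mathbf{p}$-saturation of $\impaths$) is a sensible way to attack the conjecture, but as it stands the proposal replaces the open problem by two other unproved claims (primality/dimension of $\impaths$, and $I_{\T}\subseteq\impaths$) rather than resolving it.
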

 
For brevity, we decided not to discuss the algorithmic properties of the  ideal $\impaths$ in this text. However, the available nested polynomial representations of staged trees, as in \cref{eq:tv} and analysed by \citet{Goergen.etal.2018}, provide a promising computational tool to find $\impaths$. This is because using these, we can recursively relate extensions of path differences to the polynomials $t(v)$ for any $v\in V$.  

Restricting our study to toric staged tree models in \cref{sub:thmstar}, we saw that the algebraic characterisation of these is closely related to condition \cref{eq:star} which is necessary for paths to extend. In the context of decomposable graphical models, we know that the ideal of model invariants given by conditional independence statements is the toric ideal defining the kernel of the associated parameterisation, and that moreover the generators of this ideal form a Gr\"obner basis. In the analysis conducted in \cref{sect:example} of this paper, we saw that for decomposable models the ideal $\impaths$ is exactly the ideal obtained by \citet{Geiger.etal.2006} and that therefore its generators form a Gr\"obner basis. In the toric model represented by the tree $\T_{>0}$ from \cref{fig:biostaged}, we can see that not only $\ker \varphi= \impaths$ but the binomial generators of $\impaths$ form a Gr\"obner bases of the ideal they generate. This leads is to state a variation of \cref{conj:mpaths} for toric staged tree models.

\begin{conjecture}
Let $\T=(V,E)$ be a staged tree and suppose that condition \cref{eq:star} holds for all vertices $v,w \in V$ which are in the same stage. Then 
\begin{equation}
\ker \varphi= \impaths
\end{equation}
and the generators of $\impaths$ corresponding to path differences of maximal extensions form a Gr\"{o}bner basis of $\ker \varphi$.
\end{conjecture}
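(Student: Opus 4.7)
The plan is to prove the two claims of the conjecture in sequence, building on the toric characterisation established in Theorem~\ref{thm:toric}. Throughout, I would use that condition \cref{eq:star} holds at every stage, so that $\ker \varphi = \ker \phitoric$ is a prime toric ideal, generated by pure binomials $p^u - p^v$ with $\phitoric(p^u) = \phitoric(p^v)$. The inclusion $\impaths \subseteq \ker \varphi$ follows directly from the defining conditions of a maximal extension: for any extended pair $(\head{v} \to \tail{w}, \head{w} \to \tail{v})$ of $(v_i \to w_j, w_i \to v_j)$ with $v \sim w$, applying \cref{eq:tv} to each $\varphi(p_{[\cdot]})$ produces a common monomial factor from the edges on $v_0 \to v$ and $v_0 \to w$, while condition~(2) of the extension together with the stage equality $\ttheta_v = \ttheta_w$ cancels the remaining contributions.

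For the reverse inclusion $\ker \varphi \subseteq \impaths$, the natural strategy is induction on the number of non-leaf vertices of $\T$. The base case, a tree of depth one, is trivial since $\impaths$ then coincides with the single stage ideal, whose generators are already maximal path differences. For the inductive step, pick an internal vertex $v$ all of whose children are leaves and consider the truncated tree obtained by pruning $E(v)$; the recursive identity \cref{tv:recursive}, $t(v) = \sum_{(v,v') \in E(v)} \theta(v,v') \cdot t(v')$, permits one to decompose a binomial in $\ker \varphi$ for the full tree as a sum of a binomial lying in $I_{\mathrm{max}(v \sim w)}$ for whatever stage contains $v$ and a binomial pulled back from the kernel of the truncated parametrisation, to which the inductive hypothesis applies. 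Lemma~\ref{lem:extended} guarantees that the newly created pairs still fully extend, so every intermediate binomial produced stays inside $\impaths$.

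For the Gröbner basis statement, I would fix a reverse lexicographic term order refining a linear extension of the tree's partial order on leaves (matching the order observed for $\T_{>0}$ at the end of \cref{sect:example}) and apply Buchberger's criterion. Each S-pair of two maximal-path binomials $p_ap_b - p_cp_d$ and $p_ap_e - p_fp_g$ with a shared leading term translates, via condition~(2) in the definition of extension, into a concatenation of two chains of stage identifications; this concatenation yields a third maximal extension whose path difference reduces the S-pair to zero. The toric hypothesis \cref{eq:star} is precisely what ensures that the concatenated pair again extends all the way to leaves, so that the witnessing binomial belongs to the claimed generating set and Buchberger's criterion closes the argument.

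The main obstacle, by some distance, is the inductive step in the second paragraph: converting an abstract toric binomial $p^u - p^v \in \ker \phitoric$ into an explicit $\R$-combination of maximal path differences is combinatorially intricate, because one must track simultaneously the stage identifications invoked and the cancellation pattern forced by \cref{tv:recursive} across arbitrary depth. A promising alternative route is to encode the model by the integer matrix $A$ whose columns record the edge-traversal multiplicities of each root-to-leaf path, identify $\impaths$ with the binomial ideal generated by a distinguished subset of $\ker A$ coming from the stage colouring, and verify directly that this subset is a Markov basis. Showing that this subset saturates to the entire toric ideal exactly when \cref{eq:star} holds, and not merely a proper subideal, is the crux that must be overcome to promote the conjecture to a theorem.
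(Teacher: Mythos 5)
This statement is posed in the paper as an open conjecture --- the authors offer only computational evidence (the examples in \cref{sect:example}), not a proof --- so there is no paper argument to compare against, and the question is whether your proposal actually closes the conjecture. It does not: it is a plan whose decisive steps are left unexecuted, as you yourself concede in the final paragraph. The easy inclusion $\impaths\subseteq\ker\varphi$ is fine and is already implicit in the paper's discussion of extensions. The reverse inclusion is where the content lies, and the induction you sketch has a concrete hole: when you prune a vertex $v$ whose children are leaves, the truncated tree has a different set of atomic probabilities (the leaf of the truncated tree corresponds to $p_{[v]}$, a \emph{sum} of atoms of $\T$), so there is no canonical ring map along which a binomial in the truncated kernel ``pulls back'' to an element of $\impaths$ for $\T$; worse, maximal extensions in the truncated tree are generally \emph{not} maximal in $\T$, since the pruned edges may allow further extension, so the inductive hypothesis returns generators that need not lie in the generating set you are allowed to use. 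Lemma~\ref{lem:extended} does not repair this: it only says that path differences at a stage satisfying \cref{eq:star} fully extend; it says nothing about decomposing an arbitrary binomial of $\ker\phitoric$ as a combination of such differences, which is exactly the combinatorial crux.

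The Gr\"obner basis claim is likewise unsubstantiated. Buchberger's criterion requires every S-pair to reduce to zero \emph{by division against the proposed generating set}, and your argument that the S-pair of two maximal-path binomials ``translates into a concatenation of two chains of stage identifications yielding a third maximal extension'' is asserted, not proved: the product of monomials arising in an S-pair need not correspond to any pair of paths in $\T$ at all, and even for genuinely toric ideals a generating set of binomials (even a Markov basis) frequently fails to be a Gr\"obner basis, so the toric hypothesis \cref{eq:star} alone cannot do this work. The choice of term order (``revlex refining the tree's partial order on leaves'') is also not pinned down precisely enough to even begin checking the criterion. In short, both halves of the conjecture remain open after your proposal; the alternative Markov-basis route you mention in closing is a reasonable research direction, but as you note, showing that the stage-derived binomials saturate to the full toric ideal under \cref{eq:star} is precisely the unsolved problem, not a lemma one may invoke.
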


\section*{Acknowledgement}
The authors would like to thank the anonymous reviewers of this paper for their careful reading and detailed comments that significantly improved the exposition of this paper.

Part of this research was supported through the programme \enquote{Oberwolfach Leibniz Fellows} by the Mathematisches Forschungsinstitut Oberwolfach in 2017 and by the Deutsche Forschungsgemeinschaft (DFG, German Research Foundation) -- 314838170, GRK 2297 MathCoRe.
All of the computations in this paper have been carried out using the freely available software \texttt{Macaulay2} \citep{M2}.

\bibliographystyle{elsarticle-harv}
\bibliography{complete_bibliography}
\end{document}